\let\nc\newcommand
\nc{\la}{\label}
\newtheorem{conjecture}{Conjecture}
\newtheorem{theorem}{Theorem}[section]
\newtheorem{definition}[theorem]{Definition}
\newtheorem{lemma}[theorem]{Lemma}
\newtheorem{proposition}[theorem]{Proposition}
\newtheorem{example}[theorem]{Example}
\newtheorem{remark}[theorem]{Remark}
\newtheorem{problem}{Problem}
\def\k{\mathsf k}
\def\C{\mathbb C}
\newtheorem{corol}{Corollary}
\begin{document}

\title[A Poisson analogue of Noether's Problem]{A Poisson analogue of Noether's Problem}

\author{Jo\~ao Schwarz}

\begin{abstract}
In this paper we show that the Poisson analogue of the Noether's Problem has a positive solution for essentially all finite symplectic reflection groups --- the analogue of complex reflection groups in the symplectic world. Our proofs are constructive, and generalize and refines previously known results. As an interesting consequence of the solution of this problem for complex reflection groups, we obtain the Poisson rationality of the Calogero-Moser spaces associated to any complex reflection group. The results of this paper can be thought as analogues of the Noncommutative Noether Problem (cf. \cite{AD1}, \cite{FS}) and the Gelfand-Kirillov Conjecture for rational Cherednik algebras (cf. \cite{Etingof}) in the 'quasi-classical limit'. In the second half of the paper, an abstract framework to understand these results is introduced, and it is shown that every Coloumb branch of a $3d \, \mathcal{N}=4$ gauge theory is Poisson rational as an application. We also obtain the Gelfand-Kirillov Conjecture for trigonometric Cherednik algebras and the Poisson rationality of their trigonometric Calogero-Moser spaces at the same time.




\end{abstract}

\maketitle

\section{Introduction}

We assume a base field $\k$ with $char \, \k=0$. All rings in discussion are algebras over this field.

If $A$ is a Poisson domain and $S$ any multiplicatively closed subset of $A$, then the localization $A_S$ has natural structure of Poisson domain, uniquely extending the one from $A$ : \[ \{as^{-1}, bt^{-1} \} = \{a, b\}s^{-1}t^{-1} - \{a, t\}bs^{-1} t^{-2} - \{s, b\}as^{-2}t^{-1} + \{s, t\}abs^{-2}t^{-2},\]  $a,b \in A, s,t \in S$.

In virtue of this simple fact, a question that has attracted a considerable amount of interest is the following: given two distinct Poisson domains $A, A'$, when are their field of fractions isomorphic as Poisson algebras, i.e., when they are \emph{Poisson birational}? In case $X$ and $Y$ are two affine Poisson varieties, we say that they are \emph{Poisson birationally equivalent} in case $\mathcal{O}(X)$ is Poisson birational to $\mathcal{O}(Y)$. One of the most significant Poisson domains in these questions is the following:

\begin{definition}
We consider the following Poisson algebra: $\k[x_1,\ldots,x_n, y_1, \ldots, y_n ]$, with Poisson brackets defined by $\{x_i, x_j \} = \{y_i, y_j \} =0, \{x_i, y_j\} = \delta_{ij},  1 \leq i,j,\leq n$. Following \cite{Dumas}, \cite{Goodearl}, we call it the \emph{Poisson-Weyl algebra}, and denote it by $P_n$
\end{definition}

$P_n$ is, of course, the standard Poisson structure on the symmetric algebra of a non-degenerate symplectic vector space. We shall denote $K_n$ the Poisson field obtained from $P_n$ by localizing the set of all non-zero elements.

A version of the important Gelfand-Kirillov Conjecture (\cite{Gelfand}), relating the field of fractions of envelopping algebras of finite dimensional algebraic Lie algebras with the field of fractions of the Weyl algebras, has been considered for the Poisson algebra $S(\mathfrak{g})$ of a finite dimensional Lie algebra $\mathfrak{g}$ with the Konstant-Kirillov bracket; namely, when its field of fractions is Poisson birational with the field of fractions of a Poisson-Weyl Algebra. It was shown to be true for nilpotent $\mathfrak{g}$ by Vergne (\cite{Vergne}), and later generalized to algebraic solvable Lie algebras by Tauvel and Yu (\cite{Tauvel}), for algebraically closed fields. The question is also discussed in \cite{NW}. In a similar way, versions of the quantized Gelfand-Kirillov Conjecture (\cite{Brown} I.2.11, II.10.4)  for certain Poisson algebras have been considered by Goodearl and Launois (\cite{Goodearl}), and Launois and Lecoutre (\cite{Launois}).

In this paper, we are interested in a similar question, considered by Julie Baudry \cite{Baudry} and François Dumas \cite{Dumas}, which is a Poisson analogue of the Noether's Problem (\cite{Noether}) about the rationality of invariants of the field of rational functions:


\begin{problem}
\emph{(Poisson Noether's Problem)}
Let $G$ be a finite group of linear Poisson automorphisms of $P_n$, i.e., those Poisson automorphisms that fix the subspace $\k x_1 \oplus \ldots \oplus\ k x_n \oplus \k y_1 \oplus \ldots \oplus \k y_n$ --- and hence can be naturally seen as a subgroup of $SP_{2n}(\k)$. When we have $K_n^G \simeq K_n$ as Poisson fields?
\end{problem}

\begin{definition}
Whenever we have a Poisson isomorphism of a Poisson field with an certain $K_n$, we will call it Poisson rational. In case we have $X$ is an affine Poisson variety and $Frac \, \mathcal{O}(X)$ is Poisson rational, we also call $X$ Poisson rational.
\end{definition}

A particularly important case of linear Poisson automorphisms arise in the following way (cf. \cite{Dumas}):

\begin{definition}
Let $G \subset GL_n(\k)$ be a finite group of automorphisms acting an $n$ dimensional vector space $V$, and consider its diagonal action on $W=V^* \oplus V$, where $\k[V^*]=:\k[x_1,\ldots,x_n]$ and $\k[V] =: \k[y_1,\ldots,y_n]$. Then $G$ acts by Poisson automorphisms on $\k[W] = \k[x_1,\ldots,x_n, y_1, \ldots, y_n]$. In this case we call the action \emph{diagonally linear}.
\end{definition}

In \cite{Baudry}, it was  proved that Poisson Noether's Problem has positive solution in case $\k = \mathbb{C}$ and $G$ is a finite group of $SL_2(\mathbb{C})$ acting on $P_1$ and for diagonally linear action of the Weyl group $B_2$ on $P_2$.
So, in particular, the Kleinian singularities are Poisson rational. In \cite{Dumas}, for arbitrary $\k$, a positive solution was found when $G$ acts diagonally linear on $P_n$ and arises from a representation that decomposes in a direct sum of one dimensional components --- in particular, when $G$ is abelian and $\k$ is algebraically closed. An important aspect of these results is an explict exhibition of the isomorphism $K_n^G \simeq K_n$ as Poisson fields. We remark that in \cite{Baudry}, a more general version of the Poisson Noether's Problem was also considered, with results on 'quasi-classical limits' of the quantum torus (\cite{Baudry}, \cite{Baudry2}).

We remark that before the aforementioned works, Iain Gordon, in an unpublisehd manuscript \cite{Gordon}, has already shown that, in fact, for any finite $\Gamma < SL_2(\mathbb{C})$, the Poisson Noether's Problem has a positive solution for the natural action of the wreath product $\Gamma \wr S_n$ on $\mathbb{C}^{2n}$, using the work of \cite{Wang} on symplectic resolutions of the quotient variety of this type. However, the argument does not allow immediatly an explicit description of the Poisson rationality, and the idea of using symplectic resolutions of singularities cannot be applied in general (\cite{Bellamy}).

In the first half of this paper, we generalize the work of Baudry and Gordon in finding an \emph{explict} description of the positive solution for the Poisson Noether's Problem for (essentially) all non-exceptional indecomposable sympletic reflection groups (recalled in Section 2.1).

Our first main result is the following

\begin{theorem}\label{main-1}
Let $\k=\C$ and $G$ an irreducible complex reflection group or the symmetric group $S_n$ with a diagonally linear action on $P_n$. Then:
\begin{enumerate}
\item Poisson Noether's Problem has a positive solution.
\item We can exhibit explicitely the isomorphism of Poisson fields $K_n^G \simeq K_n$.
\end{enumerate}
\end{theorem}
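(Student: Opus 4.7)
The plan is to reduce the Poisson Noether Problem to the classical Noether Problem, positively solved for irreducible complex reflection groups by the Chevalley--Shephard--Todd theorem and for $S_n$ by the fundamental theorem of symmetric functions, by explicitly producing Darboux-style coordinates on $K_n^G$. I would pick homogeneous fundamental invariants $f_1,\ldots,f_n$ so that $\C[x_1,\ldots,x_n]^G = \C[f_1,\ldots,f_n]$, and consider the Jacobian matrix $A(x) = (\partial f_i/\partial x_j)_{i,j=1}^{n}$. Its determinant is a nonzero semi-invariant --- the product of the reflection hyperplane equations with appropriate multiplicities --- so $A$ is invertible over $\C(x_1,\ldots,x_n)$.

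The central construction is to define the conjugate momenta
\[
g_j := \sum_{k=1}^n (A^{-1})_{kj}\, y_k, \qquad j = 1, \ldots, n,
\]
equivalently $g = A^{-T} y$ as a column vector. I would verify three things about the tuple $(f_1,\ldots,f_n,g_1,\ldots,g_n)$: each $g_j$ lies in $K_n^G$, the canonical Poisson-Weyl relations $\{f_i,f_j\} = \{g_i,g_j\} = 0$ and $\{f_i,g_j\} = \delta_{ij}$ hold, and these $2n$ elements generate $K_n^G$. Invariance of $g_j$ is a short matrix calculation: since $f_i(\sigma x) = f_i(x)$ yields $A(\sigma x)\,\sigma = A(x)$, and since $y$ transforms dually to $x$ under the diagonal $G$-action, the product $A(x)^{-T} y$ is unchanged by $G$. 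The Darboux relations are cleanest to obtain geometrically via the rational map
\[
\phi : V^* \oplus V \dashrightarrow T^*(V/G) \simeq \mathbb{A}^{2n}, \qquad (x,y) \mapsto \bigl(f(x),\,g(x,y)\bigr),
\]
since the defining identity $y = A^T g$ is precisely the statement that $\phi$ pulls back the Liouville one-form $\sum p_i\,df_i$ on $T^*(V/G)$ to $\sum y_i\,dx_i$ on $T^*V$; hence $\phi$ is a rational symplectomorphism, and the canonical brackets on the target transfer to $(f_i,g_j)$ on the source.

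For the generation step, invertibility of $y = A^T g$ over $\C(x)$ gives $K_n = \C(x)(g_1,\ldots,g_n)$, so the $g_i$ are algebraically independent over $\C(x)$ and a fortiori over $\C(f_1,\ldots,f_n) = \C(x)^G$, yielding $[\C(x)(g):\C(f)(g)] = [\C(x):\C(f)] = |G|$. Since $[K_n:K_n^G] = |G|$ by Galois theory, the chain $\C(f_1,\ldots,f_n,g_1,\ldots,g_n) \subseteq K_n^G \subseteq K_n$ collapses to an equality, and the resulting Poisson field isomorphism $K_n^G \simeq K_n$, sending a Darboux basis of $K_n$ to $(f_i,g_j)$, is completely explicit by construction. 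The real obstacle is the relation $\{g_i,g_j\} = 0$: a direct computation entangles the $g_i$ through second derivatives of the $f_i$ in an opaque way, whereas the symplectic interpretation via the Liouville one-form on $T^*(V/G)$ --- which makes sense precisely because $V/G$ is smooth in the reflection group setting --- makes the vanishing conceptually transparent and is the heart of the proof.
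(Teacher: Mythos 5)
Your construction is, up to exchanging the roles of the two Lagrangian halves, exactly the one in the paper: the paper takes fundamental invariants $e_1,\dots,e_n$ of $\C[y_1,\dots,y_n]^G$, forms the Jacobian matrix $M=(\partial_{y_i}e_j)$, and defines the conjugate variables $x_i'=\sum_j l_{ij}x_j$ with $(l_{ij})$ the columns of $M^{-1}$ obtained by Cramer's rule --- precisely your $g=A^{-T}y$ after relabelling. Where you genuinely diverge is in how the Darboux relations and the generation statement are certified. The paper does not verify the brackets directly at all: it localizes at a power of the Jacobian determinant so that $G$ acts freely on the complement $h_r$ of the reflection hyperplanes, invokes the Cannings--Holland isomorphism $D(h_r)^G\simeq D(h_r/G)$ together with the fact that $D(X)$ is a filtered quantization of $\mathcal{O}(T^*X)$ to get a Poisson isomorphism $T^*(h_r)/G\simeq T^*(h_r/G)$, and then reads off the explicit formulas from the previously known explicit isomorphism of rings of invariant differential operators in \cite{FS0}, \cite{FS}. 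You instead stay entirely in the symplectic world: the observation that $\phi:(x,y)\mapsto(f(x),A^{-T}y)$ pulls back the Liouville one-form ($\sum g_i\,df_i=\sum y_j\,dx_j$ is immediate from $y=A^Tg$) gives all the bracket relations at once without touching second derivatives, and your degree count $[\C(x)(g):\C(f)(g)]=|G|=[K_n:K_n^G]$ via Gauss's lemma and Artin's theorem replaces the appeal to Theorem \ref{new-main} for generation. Your route is more elementary and self-contained (no differential operators, no quantization, no citation of the noncommutative Noether results), at the cost of not exhibiting the statement as the quasi-classical shadow of the $D$-module isomorphism, which is the conceptual point the paper develops further in Section 4; both arguments are correct and produce the same explicit isomorphism.
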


With this result we show:

\begin{corol} \label{corol-1}
Let $G$ be any complex reflection group. Then Poisson Noether's Problem has a positive solution.
\end{corol}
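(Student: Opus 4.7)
The plan is to reduce Corollary \ref{corol-1} to Theorem \ref{main-1} via the Shephard--Todd structure theorem for complex reflection groups. Given a complex reflection group $G \subset GL(V)$, Shephard--Todd yields a canonical decomposition $G = G_1 \times \cdots \times G_r$ and $V = V_0 \oplus V_1 \oplus \cdots \oplus V_r$, where $V_0$ is the pointwise-fixed subspace and each $G_i$ acts on $V_i$ as an \emph{irreducible} complex reflection group (and trivially on $V_j$ for $j \neq i$). Setting $n_i = \dim V_i$ and $n = n_0 + n_1 + \cdots + n_r$, the diagonal action on $W = V^* \oplus V$ splits block-diagonally, giving
\[ P_n \;\simeq\; P_{n_0} \otimes P_{n_1} \otimes \cdots \otimes P_{n_r} \]
as Poisson algebras (with $G$ acting trivially on the first factor), and consequently
\[ P_n^G \;\simeq\; P_{n_0} \otimes P_{n_1}^{G_1} \otimes \cdots \otimes P_{n_r}^{G_r}. \]

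Next, Theorem \ref{main-1}(1) supplies Poisson isomorphisms $\phi_i \colon K_{n_i}^{G_i} \xrightarrow{\sim} K_{n_i}$ for each $i \geq 1$. Together with the identity on $K_{n_0}$, these assemble into a Poisson isomorphism of tensor-product Poisson domains
\[ K_{n_0} \otimes K_{n_1}^{G_1} \otimes \cdots \otimes K_{n_r}^{G_r} \;\xrightarrow{\sim}\; K_{n_0} \otimes K_{n_1} \otimes \cdots \otimes K_{n_r}. \]
By the universal localization formula for Poisson brackets noted at the start of the introduction, this extends uniquely to a Poisson isomorphism of the respective fraction fields. On the right-hand side, the fraction field is $\Frac(P_{n_0} \otimes \cdots \otimes P_{n_r}) = \Frac(P_n) = K_n$, exactly the Poisson-Weyl field we want.

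The only step needing care is identifying the left-hand fraction field with $K_n^G$. For any Poisson domains $A, B$ over $\k$, the inclusions $A \otimes B \subseteq \Frac(A) \otimes \Frac(B) \subseteq \Frac(A \otimes B)$ force $\Frac(\Frac(A) \otimes \Frac(B)) = \Frac(A \otimes B)$; iterating reduces the left-hand fraction field to $\Frac(P_n^G)$, which in turn coincides with $K_n^G$ by the classical identity $\Frac(R)^G = \Frac(R^G)$ for finite group actions on domains. I do not anticipate a genuine obstacle here: the corollary is essentially a formal consequence of Theorem \ref{main-1}, packaged through the Shephard--Todd decomposition and the tensor-product/localization formalism. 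The only nontrivial input is Theorem \ref{main-1} itself (which handles each irreducible constituent); everything else is bookkeeping.
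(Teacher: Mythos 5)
Your proposal is correct and follows essentially the same route as the paper: the paper invokes the decomposition $W = W_1 \times \cdots \times W_m$ with $V = V_1 \oplus \cdots \oplus V_m \oplus V'$ (its Proposition \ref{coxeter-like}), writes $P_n^W \simeq P_{n_1}^{W_1} \otimes \cdots \otimes P_{n_m}^{W_m} \otimes P_k$, and applies Theorem \ref{main-1} to each irreducible factor. Your extra care in identifying $\Frac$ of the tensor product with $K_n^G$ (via $\Frac(R^G)=\Frac(R)^G$ for finite group actions) fills in bookkeeping the paper leaves implicit, but introduces nothing different in substance.
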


Using the result for the symmetric group and work of Baudry for finite subgroups of $SL_2(\mathbb{C})$, we can refine Gordon's result and show:

\begin{corol} \label{corol-2}
For $\k = \C$ and $G = \Gamma \wr S_n$ a wreath product, we have  an explicit Poisson isomorphisms $K_n^G \simeq K_n$
\end{corol}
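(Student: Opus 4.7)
The plan is to build the desired isomorphism in two stages, first descending through the normal subgroup $\Gamma^n \triangleleft \Gamma \wr S_n$ using Baudry's explicit solution of Poisson Noether for finite subgroups of $SL_2(\C)$, and then finishing off with the quotient $S_n$ using Theorem~\ref{main-1}.

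\textbf{Stage 1: invariants by $\Gamma^n$.} The wreath product $\Gamma \wr S_n = \Gamma^n \rtimes S_n$ acts diagonally linearly on $P_n = \C[x_1,\dots,x_n,y_1,\dots,y_n]$, with the $i$-th copy of $\Gamma$ acting only on the pair $(x_i,y_i)$ (via the fixed embedding $\Gamma \hookrightarrow SL_2(\C)$), and $S_n$ permuting these pairs. In particular, the action of $\Gamma^n$ respects the tensor decomposition $P_n \simeq P_1^{\otimes n}$ as Poisson algebras, factor by factor. By Baudry \cite{Baudry}, for every finite $\Gamma < SL_2(\C)$ there is an \emph{explicit} Poisson isomorphism $\varphi \colon K_1^{\Gamma} \xrightarrow{\sim} K_1$. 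Applying $\varphi$ independently to each of the $n$ factors yields an explicit Poisson isomorphism
\[
\Phi \;=\; \varphi^{\otimes n} \colon \; K_n^{\Gamma^n} \;\xrightarrow{\sim}\; K_n.
\]

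\textbf{Stage 2: transporting the $S_n$-action.} The residual action of $S_n$ on the invariant subfield $K_n^{\Gamma^n}$ is just the permutation of the $n$ tensor factors. Since $\Phi$ is defined by applying the \emph{same} isomorphism $\varphi$ in each slot, $\Phi$ intertwines this $S_n$-action with the standard permutation action of $S_n$ on $K_n$ that swaps pairs $(x_i,y_i) \leftrightarrow (x_j,y_j)$ --- which is precisely the diagonally linear $S_n$-action covered by Theorem~\ref{main-1}. Verifying this equivariance is the only technical point: it amounts to observing that the construction of $\varphi$ in \cite{Baudry} is canonical enough that applying it slot-wise commutes with permuting slots, which is immediate because Baudry's formulas use only the intrinsic data of $\Gamma \subset SL_2(\C)$.

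\textbf{Stage 3: finishing with $S_n$.} By Theorem~\ref{main-1} applied to $G = S_n$, we have an explicit Poisson isomorphism $\Psi \colon K_n^{S_n} \xrightarrow{\sim} K_n$. Composing,
\[
K_n^{\Gamma \wr S_n} \;=\; (K_n^{\Gamma^n})^{S_n} \;\xrightarrow{\;\Phi\;}\; K_n^{S_n} \;\xrightarrow{\;\Psi\;}\; K_n,
\]
all arrows being Poisson isomorphisms given by explicit formulas inherited from Baudry's $\varphi$ and from the isomorphism constructed in the proof of Theorem~\ref{main-1}.

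\textbf{Main obstacle.} The only genuine subtlety is Stage~2: ensuring that the $S_n$-action transported through $\Phi$ is still a diagonally linear permutation action on a fresh set of Poisson-Weyl generators, rather than some twisted action that would put us outside the scope of Theorem~\ref{main-1}. Once one checks (case-by-case over the ADE classification of finite $\Gamma < SL_2(\C)$, if necessary) that Baudry's isomorphisms are uniform in this sense, the rest of the argument is a direct composition.
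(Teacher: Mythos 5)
Your proposal is correct and follows essentially the same route as the paper: the paper likewise takes Baudry's explicit generators $u,v$ of $K_1^\Gamma$ with $\{u,v\}=1$, places a copy $u_i,v_i$ in each of the $n$ factors, and then reruns the symmetric-group construction from Theorem~\ref{main-1}(2) (Proposition~\ref{part2}) with $u_i,v_i$ in place of $x_i,y_i$. The equivariance point you flag in Stage~2 is indeed the only thing to check, and it holds for the reason you give (the same $\varphi$ is applied in every slot), with no need for a case-by-case ADE verification.
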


Theorem \ref{main-1} and Corollary \ref{corol-2} cover essentially all indecomposable symplectic reflection groups (cf. Section 2.1).

As a second main result, we show:

\begin{theorem} \label{main-2}
The Calogero-Moser space associated to any finite complex reflection group is Poisson rational.
\end{theorem}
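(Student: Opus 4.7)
My plan is to deduce Theorem \ref{main-2} from Corollary \ref{corol-1} by showing that for every parameter $c$, the Poisson field of fractions of the Calogero--Moser space $X_c(W)$ is isomorphic to $K_n^W$, the $W$-invariants of the Poisson--Weyl field. Recall that $X_c(W) = \Spec(Z_c)$, where $Z_c$ is the center of the rational Cherednik algebra $H_{0,c}(W)$ at $t=0$, and its Poisson bracket is obtained in the standard way from the PBW deformation $H_{t,c}(W)$: one sets $\{a,b\} := t^{-1}[\tilde a, \tilde b] \bmod t$ for any lifts $\tilde a, \tilde b \in H_{t,c}(W)$. In the special case $c=0$, $Z_0 = \C[V \oplus V^*]^W$ with the canonical symplectic Poisson bracket, which is precisely $P_n^W$ for the diagonally linear action of $W \subset GL(V)$.

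To establish $\Frac(Z_c) \cong K_n^W$ in general, I would use the classical limit of the Dunkl embedding. The standard Dunkl embedding $H_{t,c}(W) \hookrightarrow \mathcal D(V^{\rm reg}) \rtimes W$ is defined for every $t$, and specializes at $t=0$ to a Poisson-algebra embedding $H_{0,c}(W) \hookrightarrow \C[V^{\rm reg} \oplus V^*] \rtimes W$ that intertwines the two natural Poisson brackets. After inverting a suitable power of the discriminant $\delta \in \C[V]^W$, both sides become Azumaya algebras of rank $|W|^2$ over their centers, and the embedding becomes an isomorphism. Passing to centers yields a Poisson isomorphism $Z_c[\delta^{-1}] \cong \C[V \oplus V^*]^W[\delta^{-1}]$, and taking fraction fields gives $\Frac(Z_c) \cong \Frac(\C[V \oplus V^*]^W) = K_n^W$ as Poisson fields.

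Combining with Corollary \ref{corol-1}, which provides $K_n^W \cong K_n$ for any complex reflection group $W$, one obtains $\Frac(\mathcal O(X_c(W))) \cong K_n$, so $X_c(W)$ is Poisson rational. The main obstacle will be the verification that the Dunkl localization is actually a morphism of \emph{Poisson} algebras: the bracket on $Z_c$ is extracted from the quantization $H_{t,c}$, whereas the bracket on $\C[V \oplus V^*]^W$ comes directly from the symplectic form, so the compatibility amounts to the coherent behavior of the Dunkl embedding across the full two-parameter family $H_{t,c}$. This should be readable off the known constructions in the literature but deserves to be stated cleanly in the full complex reflection group generality.
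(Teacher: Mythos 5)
Your proposal is correct and follows essentially the same route as the paper: reduce to the statement that $\mathcal{M}_c(W,h)$ is Poisson birationally equivalent to $(h\oplus h^*)/W$, and then invoke the positive solution of the Poisson Noether Problem for complex reflection groups. The only difference is that the intermediate step you sketch via the Dunkl embedding and the Azumaya property after inverting the discriminant is precisely the content of Proposition 17.7 of Etingof--Ginzburg, which the paper simply cites.
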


The second half of this paper builts on the fact that the cases of isomorphism between Poisson fields discussed above are the 'quasi-classical limit' of similar questions on the noncommutative Ore domains that quantize the relevant Poisson algebras. For instance, Poisson Noether's Problem is the 'quasi-classical limit' of Noncommutative Noether's Problem (\cite{AD1}). In \cite{FS}, Thm. 1.2, the author together with V. Futorny, have shown that, given an affine complex variety $X$ with the action of a finite group $G$ such that $X/G$ is birationally equivalent to an affine variety $Y$, then $Frac \, D(X)^G \simeq Frac \, D(Y)$. In Theorem \ref{quasi-classical}, we prove a version of this result in the quasi-classical limit, which provides an abstract understanding of our approach in proving Theorem \ref{main-1}. Following these ideas, as applications we prove our third and fourth main results:

\begin{theorem} \label{main-3}
Every Coulomb branch of a $3d \, \mathcal{N}=g$ gauge theory, with symplectic representation of cotangent type, is Poisson rational.
\end{theorem}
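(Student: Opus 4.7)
The plan is to combine the abstract quasi-classical framework of Theorem \ref{quasi-classical} with the generic description of cotangent-type Coulomb branches coming from the Braverman--Finkelberg--Nakajima construction. Recall that for a reductive group $G$ with a symplectic representation $N=T^{*}V$, the Coulomb branch $\mathcal{M}_{C}(G,N)$ is a Poisson variety that carries an integrable system, and the flavor-deformation / localization machinery exhibits an open dense ``abelianized'' subset that is Poisson birational to a Weyl group quotient of the form $(T^{*}(T^{\vee})\times S)/W$, where $T\subset G$ is a maximal torus, $T^{\vee}$ its Langlands dual, $W$ the Weyl group, and $S$ a symplectic affine slice manufactured from $V$. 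The first step of the proof is to import this geometric input.

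Next I would verify that the Poisson field of $X:=T^{*}(T^{\vee})\times S$ is isomorphic to some $K_{n}$. The factor $S$ is a symplectic affine space, so its field of fractions is manifestly $K_{m}$. For the factor $T^{*}(T^{\vee})$, choose multiplicative coordinates $t_{1},\dots,t_{r}$ on $T^{\vee}$ together with the canonically dual additive coordinates $h_{1},\dots,h_{r}$ on $\mathfrak{t}$, so that $\{h_{i},t_{j}\}=\delta_{ij}t_{j}$ and $\{h_{i},h_{j}\}=\{t_{i},t_{j}\}=0$. Then setting $u_{i}:=h_{i}/t_{i}$ and $v_{i}:=t_{i}$ is a direct computation yielding $\{u_{i},v_{j}\}=\delta_{ij}$ and $\{u_{i},u_{j}\}=\{v_{i},v_{j}\}=0$, so $\mathrm{Frac}\,\mathcal{O}(T^{*}(T^{\vee}))\cong K_{r}$ as Poisson fields. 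This is the same trigonometric change of variables that underlies the Gelfand--Kirillov-type statement for trigonometric Cherednik algebras announced in the Introduction.

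With $X$ identified Poisson-birationally with $\C^{2n}$, I would then apply Theorem \ref{quasi-classical}: the Weyl group $W$ acts on $X$ by Poisson automorphisms (linearly on $\mathfrak{t}$, by the cotangent lift of its standard action on $T^{\vee}$, and in the standard way on the slice $S$), and $X/W$ is birationally equivalent to $\mathcal{M}_{C}(G,N)$ as ordinary affine varieties by Step 1. The abstract theorem then reduces Poisson rationality of $\mathcal{M}_{C}$ to Poisson rationality of $K_{n}^{W}$. Since $W$ is a Weyl group, in particular a complex reflection group, the latter is exactly the content of Theorem \ref{main-1} (together with the trigonometric refinement already used for the $T^{*}(T^{\vee})$ factor, for which the $W$-action is the classical limit of the trigonometric Dunkl embedding).

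The main obstacle is the first step. The explicit Poisson-Weyl identification and the reduction to the Weyl group quotient are formal once the geometric picture is in place, but one must carefully extract, from the BFN construction, the precise form of the abelianized description of $\mathcal{M}_{C}(G,T^{*}V)$ and check that the induced Poisson structure agrees up to Poisson birational equivalence with the natural symplectic form on $T^{*}(T^{\vee})\times S$. Once this compatibility is established --- it is essentially the semiclassical shadow of the known localization theorems for the quantized Coulomb branch --- the passage to Poisson rationality follows formally from the abstract framework and the reflection-group case already treated in the first half of the paper.
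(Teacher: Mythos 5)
There is a genuine gap, concentrated in your third step. The paper's proof runs as follows: by \cite{BFN}, Corollary 5.21, $\mathcal{M}_C(G,M)$ is Poisson birationally equivalent to $T^*(H^\vee/W)$ (no auxiliary slice $S$ appears --- the birational type of a cotangent-type Coulomb branch depends only on $G$, not on the matter representation, so your ``abelianized subset $(T^*(T^\vee)\times S)/W$'' misstates the geometric input, although a spurious symplectic affine factor would not by itself be fatal). The decisive fact, which your argument never supplies, is that $H^\vee/W$ is itself a \emph{rational variety}; granting that, Proposition \ref{birat} immediately identifies the Poisson field of $T^*(H^\vee/W)$ with $K_r$, $r=\mathrm{rk}\,H^\vee$, and the theorem follows with no invariant-theoretic work at all.

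Your substitute for this fact --- reducing to ``Poisson rationality of $K_n^W$'' and citing Theorem \ref{main-1} --- does not work as stated. Theorem \ref{main-1} solves the Poisson Noether problem only for \emph{diagonally linear} actions of reflection groups on $P_n$, whereas the $W$-action on $T^*(T^\vee)$ is the cotangent lift of the (multiplicative, nonlinear) action on the torus; after your trigonometric change of variables $u_i=h_i/t_i$, $v_i=t_i$ (which is a correct Darboux computation, but beside the point) the $W$-action is not linear in the $u_i,v_i$, so the invariant field is not covered by Theorem \ref{main-1}, and Theorem \ref{quasi-classical} is likewise being applied to the wrong object: it takes as hypothesis the birational triviality of the \emph{base} quotient $T^\vee/W$ and outputs the conclusion about $\mathcal{O}(T^*(T^\vee))^W$ --- it does not convert a statement about $K_n^W$ for a nonlinear action into one for a linear action. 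To repair your argument you must insert the classical fact that $T^\vee/W$ is rational (in the paper this is deduced from $H^\vee/W$ being a connected affine algebraic group over $\C$); once that is in hand, either Proposition \ref{birat} or Theorem \ref{quasi-classical} finishes the proof, and the appeal to Theorem \ref{main-1} can be discarded entirely.
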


\begin{theorem} \label{main-4}
Gelfand-Kirillov Conjecture holds for all trigonometric Cherednik algebras, and the trigonometric Calogero-Moser spaces are Poisson rational.
\end{theorem}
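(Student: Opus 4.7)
The plan is to deduce both assertions from the classical fact that $T/W$ is rational (where $T$ is a maximal torus of the relevant simply connected semisimple algebraic group and $W$ is its Weyl group), via the trigonometric Dunkl--Cherednik embedding, combined with the noncommutative Noether result of \cite{FS} for the Gelfand--Kirillov part and with Theorem~\ref{quasi-classical} for the Poisson part.

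For the Gelfand--Kirillov Conjecture, I would first use the trigonometric Dunkl--Cherednik representation to embed $H_{trig}$ into $D(T_{reg}) \# W$; a standard localization at the discriminant gives $\Frac(H_{trig}) \simeq \Frac(D(T_{reg}) \# W)$. The averaging-idempotent argument then identifies the latter with $\Mat_{|W|}\!\bigl(\Frac(D(T_{reg}))^W\bigr)$, or equivalently, when one passes to the spherical subalgebra, simply with $\Frac(D(T_{reg}))^W$. Next, by Steinberg's theorem the invariant ring $\C[T]^W$ is a polynomial ring, so $T/W$ is birationally (in fact isomorphically) an affine $n$-space. Applying \cite{FS}, Thm.~1.2, then yields $\Frac(D(T_{reg}))^W \simeq \Frac(D(\mathbb{A}^n))$, the $n$-th Weyl skew field, which proves the Gelfand--Kirillov Conjecture.

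For the Poisson rationality of the trigonometric Calogero--Moser space, I would run the entirely parallel argument in the quasi-classical limit. The trigonometric Calogero--Moser space admits a Poisson analogue of the Dunkl embedding, reducing the problem to identifying the Poisson field $\Frac(\mathcal{O}(T^*T_{reg}))^W$. Applying Theorem~\ref{quasi-classical} to the cotangent lift of the $W$-action on $T_{reg}$, and once more using the rational isomorphism $T/W \simeq \mathbb{A}^n$, yields a Poisson isomorphism with $K_n$, as desired.

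The main obstacle is verifying that the setting satisfies the hypotheses of Theorem~\ref{quasi-classical}: one has to check that the cotangent lift of the $W$-action on $T_{reg}$ is of the required symplectic cotangent type, and that the Poisson analogue of the Dunkl embedding fits into the abstract framework uniformly in the deformation parameters, so that the birational identification is insensitive to their values. Once this reduction is in place, both statements follow in parallel from the single geometric input that $T/W$ is rational, along exactly the same two-step pattern already used to prove Theorem~\ref{main-3}.
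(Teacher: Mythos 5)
Your proposal follows essentially the same route as the paper: both reduce the two assertions, via the trigonometric Dunkl embedding and the spherical idempotent --- which the paper simply imports from Etingof's results that $\Frac\, U_{t,c}(X,G) \simeq \Frac\, D(X)^G$ and that $\mathcal{M}_c(X,G)$ is Poisson birational to $(T^*X)/G$ --- to the single geometric input that $H/W$ is rational, and then invoke \cite{FS}, Thm.~1.2 for the Gelfand--Kirillov part and Theorem~\ref{quasi-classical} for the Poisson part. The only cosmetic differences are that the paper deduces rationality of $H/W$ from the rationality of connected linear algebraic groups rather than from Steinberg's theorem, and that the ``cotangent type'' hypothesis you flag as the main obstacle is not actually required by Theorem~\ref{quasi-classical}, which only needs a finite group acting on a smooth affine variety with rational quotient.
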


The structure of the paper is as follows. In Section 2 we discuss the relevant preliminaries on sympletic reflection groups, Cherednik algebras, Calogero-Moser spaces and Coulomb branches. In Section  3.1 we prove Theorem \ref{main-1} for irreducible complex reflection groups, following the idea in \cite{FS} in the 'quasi-classical limit'. The part (2) of this Theorem is Proposition \ref{part2}. In Section 3.2, as applications, we prove Corollaries \ref{corol-1} and \ref{corol-2}, and Theorem \ref{main-2}. In Section 4.1 we discuss the connection between birational equivalence and Poisson birational equivalence of cotangent bundles, which provides an abstract understanding of the proof of Theorem \ref{main-1} --- cf. Theorem \ref{quasi-classical}. In Section 4.2, as applications, we prove Theorem \ref{main-3}  and Theorem \ref{main-4} (in a slightly more general form).

\section*{Acknowledgments}

J. S. is supported by FAPESP grant 2018/8146-5. The author is grateful to Vyacheslav Futorny and Farkhod Eshmatov for very fruitfull discussions regarding this paper, and to the Sichuan University for its support and hospitality during his visit. He is also grateful to M. Soares and M. C. Cardoso.

\section{Preliminaries}

\emph{Through the rest of the paper}, the base field is the complex numbers.

\subsection{Symplectic reflection groups and generalized Calogero-Moser spaces}
Lets recall the important notion of sympletic reflection groups, which are analogues for vector spaces with a non-degenerate sympletic form of a complex reflection group.

\begin{definition}
Let $V$ be a complex vector space of dimension $2n$, with a non-degenerate skew-symmetric form $\omega$. Let $\Gamma$ be a finite subgroup of $SP_{2n}(\C)$ generated by \emph{sympletic reflections}: that is, elements $g \in \Gamma$ such that $1-g$ has rank two. Then $\Gamma$ is called a \emph{sympletic reflection group}.
\end{definition}

We call the data above a \emph{sympletic triple} and denote it by $(V, \omega, \Gamma)$.

Their importance lies, among other things, in the study of sympletic singularities and sympletic reflection algebras; the later were introduced by Etingof and Ginzburng in \cite{Etingof}, and the former by Beauville in \cite{Beauville}. It was shown in \cite{Verbitsky} that a quotient of a  complex sympletic vector space $V$ by a finite subgroup $\Gamma$ of $SP_V(\C)$ has a symplectic resolution if and only if $\Gamma$ is a sympletic reflection group.

We now discuss our two main sources of sympletic reflection groups.

\begin{enumerate}
\item

Let $W$ be a finite complex reflection group acting on a vector space $h$ and in its dual $h^*$ in contragradient manner. Then if calling $V = h \oplus h^*$ we define a non-degenerate symplectic form:

\[ \omega((y,f),(u,g)) = g(y) - f(u), y,u \in h, \, f,g \in h^*, \]

then we have that the diagonal action of $W$ turns it into a sympletic reflection group.

\item

The natural action of  the wreath product $\Gamma \wr S_n$ on $\C^{2n}$, $\Gamma < SL_2(\C)$ a finite group. 

\end{enumerate}

Like the case of complex reflection groups, we have a decomposition of a sympletic triple into indecomposable ones, i.e., triples $(V,\omega,\Gamma)$ such that $V$ cannot be expressed as a direct sum of two non-trivial $\Gamma$-stable subspaces $V_1, \ldots V_2$ with $\omega(V_1,V_2)=0$. They have been classified (\cite{GS}), and the bulk of the classication theorem says that, apart from certain exceptional cases, all indecomposable triples are of the form (1) or (2) above.

Now we remember the notion of Calogero-Moser spaces.

They were first considered by Kazhdan, Konstant and Sternberg (\cite{KKS}) and studied by Wilson (\cite{Wilson}) and many others (cf. \cite{Etingof2}), and generalized for any complex reflection group (the original case being $S_n$) in \cite{Etingof}. To introduce it we recall some basic notions of rational Cherednik algebras (\cite{Etingof}). They are a double step degeneration of the double affine Hecke algebras considered by Cherednik (\cite{Cherednik}). The first step is the so called trigonometric Chrednik algebra, discussed bellow.

Let $W$ be a finite complex reflection group acting on a complex vector space $h$. Let $S$ be the set of of reflections and, for each $s \in S$ take $\alpha_s \in h^*$ and $\alpha_s^\vee \in h$ such that $ \alpha_s$ is an eigenvector of $\lambda_s$ (the non-trivial eigenvalue of $s$ in $h^*$); and $\alpha_s  ^\vee$ is an eigenvector of $\lambda_s^{-1}$ (the non-trivial eigenvalue of $s$ in $h$). Normalize them such that in the natural pairing $h^* \times h \rightarrow \C$ we have $(\alpha_s, \alpha_s^\vee)=2$. Finally, let $c: S \rightarrow \mathbb{C}$ a conjugation invariant function. 

Consider the algebra $H_{c,t}(W,h), t \in \C$, with the following generators and relations: the quotient of $\mathbb{C}W \ltimes T(h \oplus h^*)$ by the relations:

\[ [x,x']=[y,y']=0; \, [y,x]=tx(y)-\sum_{s \in S} c_s(y, \alpha_s) (x, \alpha_s^\vee) s, \]
with $x,x' \in h^*, y,y' \in h$.

\begin{definition}
$H_{c,t}(W,h)$ is called the Rational Cherednik algebra.
\end{definition}

In case $t \neq 0$, the center of this algebra is just $\C$; but in case $t=0$, the center --- which we will denote by $Z_c(W,h)$ --- is quite big. In fact, the rational Cherednik algebra is a finite module over it (\cite{Etingof}).

We have

\begin{proposition}
\emph{(\cite{Etingof})}
$Z_c(W,h)$ is a finitely generated algebra without zero divisors. It is also a Poisson algebra.
\end{proposition}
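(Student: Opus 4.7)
The plan is to prove the three assertions --- finite generation, the integral domain property, and the existence of a Poisson bracket --- by leaning on the PBW theorem for $H_{c,0}(W,h)$: the multiplication map $\C[h] \otimes \C W \otimes \C[h^*] \to H_{c,0}(W,h)$ is an isomorphism of $\C$-vector spaces, and the associated graded with respect to the filtration placing $h \oplus h^*$ in degree one (and $W$ in degree zero) is the smash product $\C[h \oplus h^*] \rtimes W$. This flatness statement is the genuinely nontrivial input; everything else is essentially formal once it is granted.

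For finite generation, I would first note that the subalgebra $A := \C[h]^W \otimes \C[h^*]^W$ lies inside $Z_c(W,h)$: the defining relation $[y,x] = -\sum_{s \in S} c_s (y,\alpha_s)(x,\alpha_s^{\vee}) s$ at $t=0$ is a linear combination of reflections, and a standard averaging over the $W$-action shows that $W$-invariants in $\C[h]$ and in $\C[h^*]$ commute with every generator of $H_{c,0}$, hence are central. By Chevalley--Shephard--Todd, $\C[h \oplus h^*]$ is finite over $A$, so by PBW the algebra $H_{c,0}(W,h)$ is module-finite over the noetherian finitely generated $\C$-algebra $A$. Since $A$ is central, $Z_c$ is an $A$-submodule of the finite $A$-module $H_{c,0}$, hence finitely generated as an $A$-module by noetherianity, and therefore finitely generated as a $\C$-algebra. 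The same filtration delivers the domain property: $\mathrm{gr}\, Z_c$ embeds into the center of $\mathrm{gr}\, H_{c,0} = \C[h \oplus h^*] \rtimes W$, which --- by a direct computation, expanding a putative central element $\sum_{w} f_w w$, commuting with the polynomial part, and invoking faithfulness of the $W$-action to force $f_w = 0$ for $w \neq 1$ --- is exactly $\C[h \oplus h^*]^W$, a domain. A filtered algebra with integral associated graded is integral, so $Z_c$ is a domain.

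For the Poisson structure I would invoke the flat one-parameter family $H_{c,t}(W,h)$. Given $z_1, z_2 \in Z_c = Z(H_{c,0})$, choose any lifts $\tilde z_1, \tilde z_2$ in the family regarded as a $\C[t]$-algebra; both reduce at $t=0$ to central elements, so the commutator $[\tilde z_1, \tilde z_2]$ vanishes modulo $t$ and may be written as $t \cdot u_{12}$, whose reduction $\bar u_{12} \in H_{c,0}$ is readily seen to be independent of the chosen lifts and to land in $Z_c$. Setting $\{z_1,z_2\} := \bar u_{12}$ produces a $\C$-bilinear, skew-symmetric bracket, and the Leibniz and Jacobi identities are inherited from those of the commutator in the family by the usual deformation-quantization argument.

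The principal obstacle in this outline is the PBW theorem itself, which is equivalent to verifying the Jacobi identity for the defining relations of $H_{c,t}(W,h)$; this is precisely the foundational content of \cite{Etingof} being cited here, and once it is in hand every step above is essentially mechanical.
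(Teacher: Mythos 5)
Your proposal is correct and reconstructs essentially the standard Etingof--Ginzburg argument that the paper simply cites here without giving any proof of its own: PBW plus centrality of $\C[h]^W\otimes\C[h^*]^W$ for finite generation, the embedding $\mathrm{gr}\,Z_c\hookrightarrow Z(\C[h\oplus h^*]\rtimes W)=\C[h\oplus h^*]^W$ for the domain property, and the quasi-classical limit of the flat family $H_{c,t}$ for the Poisson bracket. The one imprecise phrase is ``averaging'' for why $W$-invariants are central: the actual mechanism is the commutator identity expressing $[y,p]$, for $p\in\C[h]$, as a sum over reflections of terms proportional to $p-s(p)$ (divided by $\alpha_s$), which vanish when $p$ is $W$-invariant --- a minor wording issue that does not affect correctness.
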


\begin{definition}
The Calogero-Moser space, denoted $\mathcal{M}_c(W,h)$, is $Spec \, Z_c(W,h)$
\end{definition}

\subsection{Trigonometric Cherednik algebras}

As we discussed above, the trigonometric Cherednik algebra is also a degeneration of the double affine Hecke algebra. We will introduce them following the approach in \cite{Etingof3}. It can be shown that it coincides with the usual definition in \cite{Etingof}  (cf. \cite{EM}, Example 7.10).

Let $X$ be an affine complex variety and $G$ a finite group of automorphisms of it. If $X^g$ is the set of fixed points of $X$ under the action of $g \in G$, the components $Y$ of $X^g$ of codimension $1$ in $X$ are called reflection hypersurfaces.

We have a canonical surjection of $\mathcal{O}_X$-modules, for each such $Y$, $(\ddagger) \, \xi_Y: T \, X \rightarrow \mathcal{O}_X(Y)/ \mathcal{O}_X$ (cf. \cite{Etingof3}, Section 2.4), where $O_X(Y)$ is the sheaf of regular functions on $X \setminus Y$ with pole of order at most one in $Y$. Fix $t \in \C$ and $c$ a $G$-invariant complex valued function on the set $S= \{(g,Y) \}$,  $g \in G$, $Y \subset X^g$ a reflection hypersurface. We have the following generalized version of the Dunkl-Opdam operators for such pairs $X$ and $G$:

\[ D = t L_v + \sum_{(g,Y) \in S} \frac{2 c(g,Y)}{1-\lambda_{Y,g}}f_Y(x)(1-g); \, D \in D(X)_r[c] \rtimes G, \]
where $D(X)_r$ is the ring of differential operators on $X$ with rational coeficients; $\lambda_{g,Y}$ is the eigenvalue of $g$ in the conormal bundle of $Y$; $L_v$ is the Lie derivative, $v$ a vector field on $X$; $f_Y \in \Gamma(X, \mathcal{O}_X(Y))$ an element in the coset of $\xi_Y(v)$ in the map considered above ($\ddagger$). When $t=0$, $t L_v$ does not vanish, but turns into the classical momentum (cf. \cite{Etingof3}, Definition 2.12).

\begin{definition}\label{genCherednik}
$H_{t,c}(X,G)$ is the subalgebra of $D_r(X)$ generated by $\mathcal{O}(X)$, $G$ and the Dunkl-Opdam operators. It is called the Cherednik algebra of $X$ and $G$. When $W$ is a Weyl group and $H$ the corresponding torus, we have the \emph{trigonometric} Cherednik algebra, denoted by $\mathbb{H}_{t,c}(W)$.
\end{definition}

We remark that in \cite{Etingof3} a more general version of Cherednik algebras is considered, allowing twists on the ring of of differential operators and non-affine $X$. We will not need to work in this generality.

\begin{definition}
Let $e = \sum_{g \in G} g$ be the indempotent and consider $U_{t,c}(X,G):= e H_{t,c}(X,G)e$ and, in case of trigonometric Cherednik algebras, $e=\sum_{w \in W} w$, $\mathbb{U}_{t,c}(W):= e \mathbb{H}_{t,c}(W)e$. These are called the spherical subalgebras.
\end{definition}

When $t=0$, the center of $H_{0,c}(X,G)$ is big, and similarly to the classical situation of rational Cherednik algebras, it can be shown that:

\begin{proposition}
\emph{\cite{Etingof3}}
$Z(H_{0,c}(X,G))$ is a finitely generated algebra without zero divisors. It is also a Poisson algebra.
\end{proposition}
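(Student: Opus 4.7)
The plan is to establish a PBW-type description of $H_{t,c}(X,G)$, and then deduce the algebraic and Poisson structure of its center at $t=0$ by formal arguments that are insensitive to the particular $X$ and $G$.

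First, I would invoke the PBW theorem for generalized Cherednik algebras from \cite{Etingof3}: the order filtration on the ambient ring $D(X)_r \rtimes G$ restricts to a filtration on $H_{t,c}(X,G)$ whose associated graded is canonically isomorphic to $\mathcal{O}(T^*X) \rtimes G$, independently of the parameters $t$ and $c$. This exhibits the collection $\{H_{t,c}\}_{t \in \C}$ as a flat family: there is an algebra $\mathcal{H}_c$, flat over $\C[t]$, with $\mathcal{H}_c/(t - t_0) \cong H_{t_0,c}$ for every $t_0 \in \C$.

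Next, I would derive the algebraic properties of $Z_c := Z(H_{0,c}(X,G))$. The associated graded $\mathcal{O}(T^*X) \rtimes G$ is prime, since $G$ acts faithfully on the domain $\mathcal{O}(T^*X)$, hence $H_{0,c}$ is itself prime, and therefore $Z_c$ is a commutative domain. For finite generation, the filtration on $H_{0,c}$ restricts to one on $Z_c$, and passing to associated graded yields
\[ \mathrm{gr}\, Z_c \;\hookrightarrow\; Z\bigl(\mathcal{O}(T^*X) \rtimes G\bigr) \;=\; \mathcal{O}(T^*X)^G. \]
By Hilbert's theorem $\mathcal{O}(T^*X)^G$ is a finitely generated $\C$-algebra and Noetherian. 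Since $\mathcal{O}(T^*X) \rtimes G$ is finite as a module over $\mathcal{O}(T^*X)^G$, the filtered algebra $H_{0,c}$ is finite over a lift of $\mathcal{O}(T^*X)^G$ inside $Z_c$ (obtained by $G$-symmetrization); consequently $Z_c$ itself is a finitely generated subalgebra.

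Finally, the Poisson structure on $Z_c$ is built by the standard quantization--dequantization construction. Given $z_1, z_2 \in Z_c$, lift them to elements $\tilde z_1, \tilde z_2 \in \mathcal{H}_c$. Their commutator vanishes modulo $t$, so $[\tilde z_1, \tilde z_2] \in t\,\mathcal{H}_c$, and we set
\[ \{z_1, z_2\} \;:=\; \bigl(t^{-1}[\tilde z_1, \tilde z_2]\bigr) \bmod t \;\in\; H_{0,c}. \]
Flatness of $\mathcal{H}_c$ over $\C[t]$, together with the bi-derivation property of the commutator, forces independence of the chosen lifts and membership of the result in $Z_c$; the Jacobi and Leibniz identities are then formal. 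The hard part will be the PBW theorem in this generalized setting, which is the technical content of \cite{Etingof3} and relies on a delicate analysis of the Dunkl-Opdam operators $D$; once PBW is in hand, the remaining claims follow as formal consequences of the rigidity of $\mathcal{O}(T^*X) \rtimes G$.
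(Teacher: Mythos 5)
The paper offers no proof of this proposition at all: it is quoted verbatim from \cite{Etingof3}, so there is no internal argument to compare against. Your sketch is, in outline, the proof that actually appears in that reference (and, for the rational case, in \cite{Etingof}): a PBW theorem identifying $\mathrm{gr}\,H_{t,c}(X,G)$ with $\mathcal{O}(T^*X)\rtimes G$, flatness of the family in $t$, primeness of $\mathcal{O}(T^*X)\rtimes G$ (for a faithful action on an irreducible $X$) forcing $H_{0,c}$ to be prime and hence its center to be a domain, and the standard $t^{-1}[\,\cdot\,,\cdot\,] \bmod t$ construction of the Poisson bracket, whose independence of lifts, centrality of the output, and Jacobi/Leibniz identities are indeed formal once $\C[t]$-flatness is known. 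All of that is sound.

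The genuine gap is in the finite-generation step. The inclusion $\mathrm{gr}\,Z_c \hookrightarrow \mathcal{O}(T^*X)^G$ proves nothing by itself, since a subalgebra of a finitely generated commutative algebra need not be finitely generated; what is needed is a finitely generated \emph{central} subalgebra $Z_0 \subset Z_c$ over which $H_{0,c}$ is a finite module, after which $Z_c$ is a submodule of a Noetherian $Z_0$-module, hence finite over $Z_0$ and affine. Your candidate for $Z_0$ --- ``a lift of $\mathcal{O}(T^*X)^G$ obtained by $G$-symmetrization'' --- does not work as stated: symmetrizing a lift produces $G$-invariant elements of $H_{0,c}$, not central ones, and invariance is far from centrality in a skew product. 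Exhibiting the central subalgebra is precisely the non-formal content of the cited result: in the rational case it is the assertion that $\C[h]^W$ and $\C[h^*]^W$ are central in $H_{0,c}$, which requires an explicit computation with the Dunkl operators, and in the general setting one must check that the invariant functions on $X$ together with the classical ($t=0$) Dunkl--Opdam operators attached to invariant vector fields generate a central copy of (enough of) $\mathcal{O}(T^*X)^G$. You flag the PBW theorem as the hard part, but this centrality statement is a separate and equally essential input; without it neither the finite generation nor the size of the center claimed in the proposition is established.
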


\begin{definition}
We denote $\mathcal{M}_c(X,G)$ the associated Poisson variety, and call it a general Calogero-Moser space. In the case of trigonometric Cherednik algebras, we write $\mathcal{M}_c(W)$ and call it the trigonometric Caloger-Moser space.
\end{definition}

\subsection{Coulomb branches}

Coulomb branches are a 'recipe' to obtain certain affine normal Poisson varieties given a $3d \, \mathcal{N}=4$ gauge theory, with data a complex  connected reductive group $G$ and a symplectic finite dimensional representation of cotangent type $M=N \oplus N^*$, $N$ a finite dimensional complex representation. These objects have long been studied in physics --- more precisely, in quantum field theory (cf. \cite{BF}), and recently, received a precise mathematical definition in case of cotangent type, begininging in the work of Nakajima \cite{N}, and a satisfactory definition completed by Braverman, Finkelberg and Nakajima \cite{BFN}. They have recently received a lot of attentation due to important connections to mathematical physics and applications in algebra (cf. \cite{BF}). We just outline here its definition and state its main properties, as the details are fairly technical --- we refer to \cite{BFN}.

Consider $\mathcal{R}_{G,M}$ the moduli space of triples $(\mathcal{P}, \phi, s)$, where $\mathcal{P}$ is a principal $G$-bundle on $D = Spec \, \C[[z]]$, the formal disk; $\phi: \mathcal{P} \rightarrow D^\times \times G$ is a local trivialization over the formal punctured disk $D^\times = Spec \, \C((z))$; $s$ a section of $\mathcal{P} \times_G N$ such that $\phi(s)$ is regular.  $\mathcal{R}_{G,M}$ is an ind-scheme, related to the affine Grassmanian of $G$ (cf. \cite{BFN}). In its equivariant Borel-Moore homology $H_*^{G_\mathcal{O}}$, where $G_\mathcal{O} = G[[z]]$, one can introduced a convolution product, which turns out to be commutative, and the Coulomb branch is the scheme $\mathcal{M}_C(G,M):=Spec \, H_*^{G_\mathcal{O}}$ with this product. The remarkable fact is:

\begin{theorem}
\emph{\cite{BFN}}
The Coulomb branches $\mathcal{M}_C(G,M)$ are normal affine Poisson varieties of dimension $rk \, G$, and its Poisson structure is symplectic in the smooth locus.
\end{theorem}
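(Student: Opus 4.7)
The plan is to realize the convolution algebra $H_*^{G_\mathcal{O}}(\mathcal{R}_{G,M})$ as the classical limit of a noncommutative quantization coming from the loop-rotation $\C^\times$-action on $\mathcal{R}_{G,M}$. First I would lift to the equivariant Borel--Moore homology $\mathcal{A}_\hbar := H_*^{G_\mathcal{O} \rtimes \C^\times}(\mathcal{R}_{G,M})$, which is a graded module over $H^*_{\C^\times}(\text{pt}) = \C[\hbar]$. The convolution product endows $\mathcal{A}_\hbar$ with an associative $\C[\hbar]$-linear product that specializes at $\hbar=0$ to the original algebra. Once one checks that this specialization is commutative, the formula $\{f,g\} := \hbar^{-1}[\tilde f,\tilde g] \bmod \hbar$ for arbitrary lifts $\tilde f, \tilde g$ produces a canonical Poisson bracket on $\mathcal{M}_C(G,M)$.

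Commutativity of the $\hbar=0$ specialization is the central nontrivial step, and I would attack it by factorization. The convolution diagram on $\mathcal{R}_{G,M}$ has a global counterpart over a smooth curve $C$ built from Beilinson--Drinfeld type affine Grassmannians: one constructs a family whose fiber over a pair of disjoint points of $C$ is a product and whose fiber over a diagonal point is $\mathcal{R}_{G,M}$ itself. The manifest $S_2$-symmetry on disjoint points then transports, by flatness, to commutativity of the convolution product at $\hbar=0$. The subtle part here is verifying the necessary flatness and base-change properties for the ind-scheme $\mathcal{R}_{G,M}$.

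Normality, affineness (automatic from $\Spec$), and the dimension count $\rk G$ I would deduce from abelianization. Restricting to a maximal torus $T\subset G$ one has an explicit description of $H_*^{T_\mathcal{O}}(\mathcal{R}_{T,M})$ as a (localized) Laurent-polynomial algebra of transcendence degree $\rk G$, whose normality and dimension are visible. The restriction-to-$T$ map embeds $H_*^{G_\mathcal{O}}(\mathcal{R}_{G,M})$ as Weyl-group invariants in a suitable localization of the abelian version; since Weyl invariants of a normal domain are normal, and since the field of fractions is preserved, normality and the correct dimension $\rk G$ follow.

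Finally, symplecticity on the smooth locus is a consequence of generic non-degeneracy: on the open subscheme where the abelianization map is an isomorphism, the Poisson bracket agrees with the standard bracket on a Laurent-polynomial ring, hence has rank $2\rk G = \dim \mathcal{M}_C(G,M)$; an affine normal Poisson variety with generically non-degenerate bracket is symplectic on its smooth part. The hardest step is the commutativity/factorization argument, which is the true geometric content of the theorem; all other items are formal consequences of the abelianization picture and general deformation-quantization principles.
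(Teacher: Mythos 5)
This theorem is not proved in the paper at all: it is imported verbatim from \cite{BFN}, so the only ``proof'' the paper offers is the citation. Your outline does track the actual strategy of Braverman--Finkelberg--Nakajima (loop-rotation quantization $\mathcal{A}_\hbar=H_*^{G_\mathcal{O}\rtimes\C^\times}(\mathcal{R}_{G,M})$, commutativity at $\hbar=0$, abelianization via restriction to a maximal torus), and identifying the commutativity/factorization step as the geometric heart is fair. But two of your deductions, as written, are not valid arguments, and they are precisely the points where the real work in \cite{BFN} happens.

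First, normality: embedding $\mathcal{A}=H_*^{G_\mathcal{O}}(\mathcal{R}_{G,M})$ into a normal ring with the same fraction field does \emph{not} make $\mathcal{A}$ normal ($\C[x^2,x^3]\subset\C[x]$ already fails), and $\mathcal{A}$ is emphatically not equal to the full ring of Weyl invariants of the localized abelianized algebra --- that ring is far too large. The hard content of the normality proof in \cite{BFN} is pinning down the image of the abelianization map precisely enough (as an intersection of explicitly controlled normal subalgebras of the common fraction field) that normality can be read off; your sketch skips exactly this identification. Second, symplecticity: the principle ``generically non-degenerate Poisson bracket on a normal affine variety implies symplectic on the smooth locus'' is false --- $\C^2$ with $\{x,y\}=x$ is smooth, affine, normal, and generically non-degenerate, yet degenerate along $x=0$. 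Non-degeneracy on the entire smooth locus requires a genuine additional argument, not a formal one. Finally, note a dimension inconsistency: you assert transcendence degree $\rk\, G$ for the abelianized algebra and ``dimension count $\rk\, G$,'' but then correctly write $\dim\mathcal{M}_C(G,M)=2\,\rk\, G$; the correct value is $2\,\rk\, G$ (the statement as quoted in the paper carries the same slip), and the abelianized Laurent-polynomial model has transcendence degree $2\,\rk\, G$, not $\rk\, G$.
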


\section{Poisson Noether's Problem}
\subsection{Proof of Theorem 1.4}
We begin this section with a simple lemma that gives a criterion of recognition of Poisson rationaity.

\begin{lemma}\label{recognition}
\emph{\cite{Dumas}}
 If $G$ is a finite group of Poisson automorphisms of $P_n$ and $K_n^G$ is a purely transcedental extension generated by algebraically independent elements $x_1', \ldots, x_n', y_1', \ldots, y_n'$ such that $\{x_i', x_j' \} = \{y_i', y_j' \} =0, \{x_i', y_j'\} = \delta_{ij}, 1 \leq i,j,\leq n$, then $K_n^G$ is isomorphic as Poisson field to $K_n$, isomorphism given by $\phi: K_n^G \rightarrow K_n$, $x_i',y_i' \mapsto x_i, y_i, i=1,\ldots,n$, respectively.
\end{lemma}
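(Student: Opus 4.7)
My plan is to exploit the universal property of polynomial algebras together with the uniqueness of the Poisson bracket on a localization (the formula recalled at the start of the paper). First I would consider the subalgebra
\[ A := \k[x_1', \ldots, x_n', y_1', \ldots, y_n'] \subset K_n^G. \]
Since the $x_i', y_j'$ are algebraically independent by hypothesis, $A$ is a polynomial algebra on $2n$ free generators, and the universal property produces a unique $\k$-algebra isomorphism $\psi: A \to P_n$ with $x_i' \mapsto x_i$ and $y_j' \mapsto y_j$.

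The key step is to upgrade $\psi$ to a Poisson isomorphism. A Poisson bracket on a polynomial algebra is determined by its values on a set of generators through antisymmetry and the Leibniz rule. The hypothesis on the brackets of $x_i', y_j'$ is exactly that they match the defining brackets of $P_n$; in particular, the Poisson bracket inherited on $K_n^G$ from $K_n$ restricts to a Poisson bracket on $A$ (all generator brackets land back inside $A$, actually inside $\k$), and the two Poisson structures agree under $\psi$. Since $K_n^G$ is assumed to be purely transcendental over $\k$ generated by the $x_i', y_j'$, we have $K_n^G = \Frac(A)$, and the localization formula for Poisson brackets recalled in the introduction guarantees that the bracket on $A$ extends uniquely to its fraction field. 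Hence $\psi$ extends to a Poisson field isomorphism $\Frac(A) \to \Frac(P_n) = K_n$, which is the desired map $\phi$.

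No step here poses a real obstacle; the lemma is essentially a formal consequence of three facts: (i) the universal property of polynomial rings in algebraically independent generators, (ii) Poisson brackets on polynomial rings being determined by their values on generators, and (iii) the unique extension of a Poisson bracket to a localization. The assumption that $G$ acts by Poisson automorphisms is used only implicitly, to ensure that $K_n^G$ really does inherit a Poisson field structure from $K_n$ on which the hypothesized bracket relations can be stated.
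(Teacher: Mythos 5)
Your argument is correct and complete: the subalgebra $A=\k[x_1',\ldots,y_n']$ is a Poisson subalgebra because the generator brackets are scalars, two biderivations on a polynomial algebra agreeing on generators coincide, and the unique extension of the bracket to $\Frac(A)=K_n^G$ via the localization formula forces $\psi$ to extend to the desired Poisson field isomorphism $\phi$. The paper itself offers no proof of this lemma (it is quoted from Dumas), and your argument is exactly the standard one that reference supplies, so there is nothing to compare beyond noting agreement.
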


Now we recall the following well known facts:

\begin{theorem}\label{CH}
Let $X$ be an affine smooth algebraic variety and $G$ a finite group that acts freely on it. Then the injection $\mathcal{O}(X)^G \rightarrow \mathcal{O}(X)$ induces, by restriction of differential operators, a filtration preserving isomorphism $\theta: D(X)^G \rightarrow D(X/G)$.
\end{theorem}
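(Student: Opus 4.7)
The plan is to exploit the fact that, since $X$ is smooth and $G$ acts freely, the quotient $X/G$ exists as a smooth affine variety and the projection $\pi:X\to X/G$ is an étale Galois covering with group $G$. Everything then reduces to the compatibility of differential operators with étale maps.

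First I would define the map $\theta$. Any $D\in D(X)^G$ commutes with the action of $G$ on $\mathcal{O}(X)$, so it preserves the subring of invariants $\mathcal{O}(X)^G=\mathcal{O}(X/G)$, and restriction gives a $\mathbb{C}$-linear endomorphism $\theta(D)$ of $\mathcal{O}(X/G)$. That $\theta(D)\in D(X/G)$, and that $\theta$ respects the order filtration, follows by induction using the standard recursive characterization $F^{k+1}D=\{P:[P,f]\in F^kD\text{ for all regular }f\}$: a commutator with $f\in\mathcal{O}(X/G)\subset\mathcal{O}(X)$ of a $G$-invariant operator of order $\le k+1$ is $G$-invariant of order $\le k$, so we may restrict and iterate.

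Next I would pass to associated graded. Because $X$ and $X/G$ are smooth, $\operatorname{gr} D(X)\cong\mathcal{O}(T^*X)$ and $\operatorname{gr} D(X/G)\cong\mathcal{O}(T^*(X/G))$. Since $\pi$ is étale, $\Omega_X\cong\pi^*\Omega_{X/G}$, and therefore the cotangent bundle pulls back: $T^*X\cong X\times_{X/G}T^*(X/G)$ as $G$-schemes, with $G$ acting trivially on the $T^*(X/G)$-factor. Taking $G$-invariants of the coordinate algebras and using that $\mathcal{O}(X)^G=\mathcal{O}(X/G)$ then yields $\mathcal{O}(T^*X)^G\cong\mathcal{O}(T^*(X/G))$. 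One checks that under this identification $\operatorname{gr}\theta$ is exactly the identity, hence an isomorphism in each degree.

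Finally, a standard filtered-to-graded argument closes the proof: the filtrations on $D(X)^G$ and $D(X/G)$ are exhaustive and separated, so an isomorphism on associated graded pieces implies the filtered map $\theta$ is itself an isomorphism. The main technical point, and the only place where smoothness and the freeness of the action are essentially used, is the identification $T^*X\cong\pi^*T^*(X/G)$ as $G$-varieties: this is precisely the étaleness of $\pi$. Once this is established, everything else is formal. (Alternatively, the statement is classical and one may simply cite Cannings--Holland.)
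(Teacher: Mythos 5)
The paper does not prove this statement at all: it simply cites Cannings--Holland, Theorem 3.7(1) in \cite{CH}. Your proposal instead gives a self-contained argument, and it is correct; it is in fact the standard proof of this special (free-action) case of the Cannings--Holland theorem. The two nontrivial inputs you identify are the right ones: (i) the recursive characterization $F^{k+1}D=\{P:[P,f]\in F^kD\}$ shows $\theta$ is a well-defined filtered algebra map, and (ii) \'etaleness of $\pi:X\to X/G$ gives $T^*X\cong X\times_{X/G}T^*(X/G)$ equivariantly, whence $\mathcal{O}(T^*X)^G\cong\mathcal{O}(T^*(X/G))$ (here one should note that $\mathcal{O}(X/G)$ is an $\mathcal{O}(X/G)$-module direct summand of $\mathcal{O}(X)$ via averaging, or that $\mathcal{O}(T^*(X/G))$ is flat over $\mathcal{O}(X/G)$, so that taking $G$-invariants commutes with the base change). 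Two small points you pass over quickly but which are easily supplied: the identification $\operatorname{gr}(D(X)^G)=(\operatorname{gr}D(X))^G$ uses that $G$-invariants is an exact functor ($G$ finite, characteristic zero); and the claim that $\operatorname{gr}\theta$ is the canonical map follows by computing symbols as iterated commutators $[\cdots[D,f_1]\cdots,f_k]$ with $f_i\in\mathcal{O}(X/G)$, which suffices because $\pi^*\Omega_{X/G}\to\Omega_X$ is an isomorphism. What your approach buys over the paper's bare citation is a transparent proof that makes visible exactly where smoothness and freeness enter, and it specializes the much more general Cannings--Holland setup (quotient subvarieties) to the only case the paper actually needs.
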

\begin{proof}
Theorem 3.7 (1) in \cite{CH}.
\end{proof}

\begin{theorem}\label{quantization}
Let $X$ be an affine smooth algebraic variety, and $G$ any finite group of automorphisms of it. It acts in a natural way by Poisson automorphisms on $\mathcal{O}(T^*X)$. Then, with the filtration by order of differential operators, $D(X)$ and $D(X)^G$ are filtered quantizations of $\mathcal{O}(T^*X)$ and $(\mathcal{O}(T^*X))^G$, respectively. In both cases the Poisson bracket has degree $-1$.
\end{theorem}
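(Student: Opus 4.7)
The plan is to establish the two filtered-quantization statements in sequence. The first statement about $D(X)$ alone is completely classical, and the second reduces to the first together with an averaging argument using that $G$ is finite and $\text{char}\,\k = 0$.

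First I would recall the principal symbol map. The order filtration $F_0 D(X) \subset F_1 D(X) \subset \cdots$ on the ring of differential operators is exhaustive and separated, with $F_0 D(X) = \mathcal{O}(X)$. Since $X$ is smooth affine, the symbol map identifies the associated graded with global sections of the symmetric algebra of the tangent sheaf,
\[
\sigma : gr^F D(X) \,\xrightarrow{\;\sim\;}\, \Gamma\bigl(X, \operatorname{Sym} \mathcal{T}_X\bigr) \,=\, \mathcal{O}(T^*X),
\]
as graded commutative algebras (this is essentially the PBW theorem for the almost-commutative algebra $D(X)$). The commutator bracket satisfies $[F_p, F_q] \subset F_{p+q-1}$, and hence induces a degree $-1$ biderivation on $gr^F D(X)$. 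A direct check in local coordinates (or functoriality and a check on $X = \mathbb{A}^n$) shows that this bracket coincides, under $\sigma$, with the Poisson bracket on $\mathcal{O}(T^*X)$ arising from the canonical symplectic form. Combined with the identification of associated gradeds, this is precisely the statement that $D(X)$ is a filtered quantization of $\mathcal{O}(T^*X)$ with Poisson bracket of degree $-1$.

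For the invariant part, equip $D(X)^G$ with the induced filtration $F_k D(X)^G := F_k D(X) \cap D(X)^G$. The natural inclusion of graded algebras $gr^F D(X)^G \hookrightarrow (gr^F D(X))^G$ is in fact an equality: since $|G|$ is invertible in $\k$, the Reynolds operator $e := \tfrac{1}{|G|}\sum_{g \in G} g$ is a projector of $D(X)$ onto $D(X)^G$ that preserves the order filtration, and therefore lifts every $G$-invariant symbol to a $G$-invariant operator of the same order. Thus
\[
gr^F D(X)^G \;=\; \bigl(gr^F D(X)\bigr)^G \;=\; \mathcal{O}(T^*X)^G.
\]
The $G$-action on $\mathcal{O}(T^*X)$ is by Poisson automorphisms because the cotangent lift of any diffeomorphism is a symplectomorphism, so $\mathcal{O}(T^*X)^G$ is a Poisson subalgebra, and the commutator on $D(X)^G$ induces on the associated graded the restriction of the Poisson bracket on $\mathcal{O}(T^*X)$, still of degree $-1$. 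This yields the second filtered quantization claim.

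The only step requiring genuine care is the identification $gr^F D(X)^G = (gr^F D(X))^G$; the potential obstacle is that passing to invariants need not commute with passing to associated graded in general, but here the Reynolds projector provides the required splitting on each filtered piece. Everything else is a routine packaging of the standard fact that $D(X)$ is a deformation quantization of $T^*X$ for smooth affine $X$.
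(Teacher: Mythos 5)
Your argument is correct. The paper itself gives no proof of this statement, only a citation to Schedler's notes (Proposition 1.5.3 of \cite{Schedler}), so there is nothing to compare line by line; what you have written is the standard argument that underlies that reference: the symbol isomorphism $gr^F D(X) \simeq \mathcal{O}(T^*X)$ for smooth affine $X$, the degree $-1$ bracket induced by $[F_p,F_q]\subset F_{p+q-1}$ matching the canonical symplectic bracket, and the identification $gr^F(D(X)^G) = (gr^F D(X))^G$ via the Reynolds operator. You correctly isolate the one point that genuinely needs care, namely that taking invariants commutes with taking associated graded, and the averaging argument you give for it is valid since $G$ is finite, acts by filtration-preserving automorphisms, and $|G|$ is invertible in characteristic zero.
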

\begin{proof}
\cite{Schedler}, Proposition 1.5.3.
\end{proof}

From this follow the following general result, which is of independent interest --- a quasi-classical analogue of Theorem \ref{CH}.

\begin{theorem} \label{new-main}
Let $X$ be an affine smooth algebraic variety, and $G$ a finite group that acts freely on it. Then we have an isomorphism of Poisson varieties $\phi: T^*(X)/G \rightarrow T^*(X/G)$.
\end{theorem}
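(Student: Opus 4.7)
The plan is to deduce the statement by taking the associated graded of the Cannings--Holland-type isomorphism in Theorem \ref{CH}, using the quantization fact in Theorem \ref{quantization}. Both sides of the desired isomorphism are affine, so it is enough to exhibit a Poisson isomorphism of their coordinate rings, and this will come as the classical limit of an isomorphism of quantizations.

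First, since $G$ acts freely on the smooth affine variety $X$, Theorem \ref{CH} provides a filtration-preserving isomorphism of algebras $\theta: D(X)^G \to D(X/G)$, where both are endowed with the order filtration of differential operators. Next, Theorem \ref{quantization} tells us that, with this filtration, $D(X)$ is a filtered quantization of $\mathcal{O}(T^*X)$ (with Poisson bracket of degree $-1$), and that $D(X)^G$ is a filtered quantization of the Poisson subalgebra $\mathcal{O}(T^*X)^G$; applied to the smooth affine variety $X/G$ (which is smooth because the action is free), the same theorem says that $D(X/G)$ is a filtered quantization of $\mathcal{O}(T^*(X/G))$.

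I then pass to associated graded. Because $\theta$ preserves the filtration, $\mathrm{gr}\,\theta$ is an isomorphism of graded commutative algebras
\[
\mathrm{gr}\,\theta : \mathcal{O}(T^*X)^G \;\xrightarrow{\sim}\; \mathcal{O}(T^*(X/G)).
\]
The Poisson brackets on both sides are induced from the commutator in $D(X)^G$ and $D(X/G)$ respectively via the degree $-1$ principal symbol construction, so $\mathrm{gr}\,\theta$ automatically intertwines them. Since $G$ is finite, $\mathcal{O}(T^*X)^G$ is the coordinate ring of the quotient variety $T^*X/G$ with its induced Poisson structure, and dualizing yields the desired Poisson isomorphism $\phi: T^*(X)/G \to T^*(X/G)$.

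The only mildly delicate point, and the step I expect to be the main obstacle, is verifying that the Poisson structure on $\mathcal{O}(T^*X)^G$ obtained as the associated graded of $D(X)^G$ coincides with the one induced from the canonical symplectic form on $T^*X$ restricted to invariants, and similarly on the target side; but this is built into the statement of Theorem \ref{quantization}, and the functoriality of the principal-symbol construction ensures that the graded isomorphism of a filtered algebra isomorphism is automatically Poisson. Once this is in place, Lemma \ref{recognition} (and, more generally, the constructions in the next subsection) can feed on Theorem \ref{new-main} to transfer Poisson rationality statements between cotangent bundles.
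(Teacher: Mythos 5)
Your proposal is correct and follows exactly the route the paper takes: the paper's proof is precisely to combine the filtration-preserving isomorphism of Theorem \ref{CH} with the quantization statement of Theorem \ref{quantization} and pass to associated graded. You have merely spelled out the details the paper leaves implicit.
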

\begin{proof}
Immediate consequence of Theorems \ref{CH} and \ref{quantization}
\end{proof}

With this preparation, we can begin our proof of Theorem \ref{main-1}. The idea of the proof is similar to the one of Noncommutative Noether's Problem for pseudo-reflection groups (\cite{FS}, Section 5.2), in the 'quasi-classical limit'. 

Let $G$ be an irreducible complex reflection group or $S_n$. Consider the invariants $\C[y_1,\ldots,y_n]^G \simeq \C[e_1,\ldots, e_n]$ (by Chevalley-Shephard-Todd Theorem). We introduce the $n \times n$ matrix $M$ with entries $ij$ being $\partial_{y_i} e_j, 1 \leq i,j \leq n$ and consider its determinant $J(M)$. Define $\sigma(M)=J(M)^{|G|}$ in case $G$ is an irreducible complex reflection group, and $\sigma(M)=J(M)^2$ in case it is the symmetric group. $\sigma(M)$ is non-null and $G$-invariant polynomial on the $y_1, \ldots, y_n$ (\cite{Kane}, 20-2, Props. A, B and 21-1, Props. A, B). Write also, in any case, $\sigma(M)'$ as the expression of $\sigma(M)$ as a polynomial in $\C[e_1, \ldots, e_n]$.

We are interested now in the solutions of the systems \[ (*) ML_i = Y_i, \]
where $Y_i$, $i=1, \ldots, n$, is the column vector with $1$ in the $i$th position and $0$ in all others. Let, for each $i$,  \[ L_i =  \left( \begin{array}{c} l_{i1} \\ \vdots \\ l_{in} \end{array}\right) \] be a solution of the linear system  (*). By the Kramer's rule, $l_{ij} \in \C[y_1,\ldots,y_n]_{J(M)}=\C[y_1,\ldots,y_n]_{\sigma(M)}$, $1 \leq i,j \leq n$.

In any case, consider the algebra $P_{n\sigma(M)}$ with its natural Poisson strucutre, as well its invariant subalgebra $P_{n\sigma(M)}^G$. We are going to interpret them as regular functions on a certain contangent bundle and its invariants. Introduce $h= Spec \, \C[y_1,\ldots,y_n]$; $h_r = Spec \, \C[y_1, \ldots, y_n]_{\sigma(M)}$.  $P_{n \sigma(M)}$ is $\C[h_r \times h^*] \simeq \mathcal{O}(T^* h_r)$. $G$ is a finite group of automorphisms that acts freely on $h_r$, since localizing by $\sigma(M)$ remove the reflecting hyperplanes (\cite{Kane}, 20-2, Props. A, B and 21-1, Props. A, B). The induced action of $G$ on $\mathcal{O}(T^*(h_r))$ is exactly the $G$ action on $P_{n\sigma(M)}$. By Theorem \ref{new-main}, we have an isomorphism of Poisson varieties  \[ (\dagger) \, \theta: T^*(h_r)/G \rightarrow T^*(h_r/G)=T^*(h_{r'}), \] $h_r/G = h_{r'}$, where $h_{r}' = Spec \, \C[e_1, \ldots, e_n]_{\sigma(M)'}$. In the level of rings of regular functions, we have by $\theta$ that $\mathcal{O}(T^*(h_r))^G \simeq P_{n\sigma(M)}^G$ is isomorphic as a Poisson algebra to a localization of $P_n$ --- namely $P_{n\sigma(M)'} \simeq \mathcal{O}(T^*(h_{r'}))$. So, in view of lemma \ref{recognition}, Theorem \ref{main-1} (1) holds.

We can find an explicit isomorphism of Poisson algebras using the results in \cite{FS0} and \cite{FS}, where we obtained an explicit isomorphism $D(h_r)^G \simeq D(h_{r'})$, and the fact that this isomophism is a quantization of $\mathcal{O}(T^*(h_r))^G \simeq \mathcal{O}(T^*(h_{r'})) $ - cf. Theorem \ref{quantization}. Namely, using the standard generators $x_1,\ldots,x_n, \partial_1,\ldots,\partial_n$ for the $n$-th Weyl algebra, and noticing that $D(h_{r'})$ is just its localization by $\Delta'$, we have

\begin{proposition}

When $G$ is either the symmetric group or an irreducible complex reflection group, we have that the following map defines an isomorphism between $D(h_r)^G$ and $D(h_{r'})$:

\[ e_i \mapsto x_i;  l_{i1} \partial_1 +  \ldots + l_{in} \partial_n \mapsto \partial_i, i=1,\ldots,n. \]
\end{proposition}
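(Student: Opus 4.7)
The plan is to recognize the proposed correspondence as the explicit form of the isomorphism $\theta \colon D(h_r)^G \to D(h_{r'})$ provided by Theorem \ref{CH} (restriction of differential operators), under the identification of $\mathcal{O}(h_{r'}) = \mathbb{C}[e_1,\ldots,e_n]_{\sigma(M)'}$ with $\mathbb{C}[x_1,\ldots,x_n]_{\sigma(M)'}$ via $e_i \leftrightarrow x_i$. Since $\theta$ is already known to be an isomorphism by Theorem \ref{CH}, it suffices to pin down its action on a convenient generating set, namely the $e_i$ (as multiplication operators) together with the first-order operators $\tilde{\partial}_i := l_{i1}\partial_1 + \cdots + l_{in}\partial_n$.

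The action on $e_i$ is immediate: multiplication by an invariant function is a $G$-invariant differential operator, and its restriction to $\mathcal{O}(h_r)^G = \mathcal{O}(h_{r'})$ is multiplication by $e_i$, i.e.\ $x_i$ in the standard Weyl-algebra generators. For $\tilde{\partial}_i$, the defining system $M L_i = Y_i$ (solved over the localized ring $\mathbb{C}[y_1,\ldots,y_n]_{\sigma(M)}$ by Kramer's rule) ensures that, viewed as a derivation of $\mathcal{O}(h_r)$, the operator $\tilde{\partial}_i$ satisfies $\tilde{\partial}_i(e_k) = \delta_{ik}$ for every $k$. Crucially, because $M$ becomes invertible once $\sigma(M)$ is inverted, the derivation of $\mathcal{O}(h_r)$ satisfying these equations is \emph{unique} --- this uniqueness is exactly what the localization at $\sigma(M)$ buys us.

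From uniqueness, $G$-invariance follows automatically: for any $g \in G$, the conjugate $g \tilde{\partial}_i g^{-1}$ is again a derivation of $\mathcal{O}(h_r)$, and since the $e_k$ are $G$-invariant,
\[
(g \tilde{\partial}_i g^{-1})(e_k) \;=\; g\bigl(\tilde{\partial}_i(g^{-1} e_k)\bigr) \;=\; g(\delta_{ik}) \;=\; \delta_{ik},
\]
so by uniqueness $g \tilde{\partial}_i g^{-1} = \tilde{\partial}_i$ and hence $\tilde{\partial}_i \in D(h_r)^G$. Under $\theta$, the restriction of $\tilde{\partial}_i$ to $\mathcal{O}(h_{r'}) = \mathbb{C}[e_1,\ldots,e_n]_{\sigma(M)'}$ is the derivation that sends $e_k$ to $\delta_{ik}$, which is precisely $\partial/\partial e_i = \partial_i$. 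Finally, $D(h_{r'})$ is generated over $\mathbb{C}$ by $x_1,\ldots,x_n,\partial_1,\ldots,\partial_n$ and $(\sigma(M)')^{-1}$ (a polynomial expression in the $x_i$'s), and since $\theta$ is an isomorphism by Theorem \ref{CH}, fixing its values on this generating set determines it completely, proving the proposition.

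The main obstacle, to my eye, is the verification of $G$-invariance of $\tilde{\partial}_i$; the uniqueness-based argument above sidesteps an explicit computation tracking how $G$ acts on the Jacobian matrix and on its inverse, but it depends essentially on the invertibility of $M$ over $\mathcal{O}(h_r)$. Without the localization at $\sigma(M)$ --- which removes the reflecting hyperplanes, where $G$ fails to act freely --- uniqueness breaks down and the whole scheme collapses. This is precisely why the proof is routed through the localized/free-action setting, mirroring the philosophy of Theorem \ref{new-main} on the quasi-classical side.
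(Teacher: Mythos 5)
Your argument is correct, and it is essentially self-contained, whereas the paper's own ``proof'' of this proposition consists of a citation to external results (\cite{FS0}, Lemma 1 and \cite{FS}, Proposition 5.7), which establish the same statement in the setting of the noncommutative Noether problem. Your route --- identify the map with the Cannings--Holland restriction isomorphism $\theta$ of Theorem \ref{CH} (applicable because $G$ acts freely on $h_r$ and $h_r/G = h_{r'}$), then compute $\theta$ on the generators $e_i$ and $\tilde\partial_i = \sum_j l_{ij}\partial_j$ --- is exactly the mechanism underlying those cited results, and your uniqueness trick (a derivation of $\mathcal{O}(h_r)$ is pinned down by its values on $e_1,\ldots,e_n$ once the Jacobian is inverted, whence $G$-invariance of $\tilde\partial_i$ is free) is a clean way to avoid chasing how $G$ permutes the entries of $M$ and of its inverse. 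What the paper's citation buys is brevity; what your argument buys is a verifiable proof inside the present framework, and it also makes explicit why $e_i$, $\tilde\partial_i$ and $\sigma(M)^{-1}$ generate $D(h_r)^G$ (their images generate $D(h_{r'})$ and $\theta$ is an isomorphism). One trivial caveat: with the paper's convention $M_{ij} = \partial_{y_i}e_j$, the identity $\tilde\partial_i(e_k)=\delta_{ik}$ corresponds to the transposed system $M^{T}L_i = Y_i$ rather than $ML_i=Y_i$ as literally written; this indexing looseness is inherited from the paper and does not affect the substance, since $\det M = \det M^{T}$ and the $l_{ij}$ still lie in $\C[y_1,\ldots,y_n]_{\sigma(M)}$ by Cramer's rule.
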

\begin{proof}
By \cite{FS0} Lemma 1 and \cite{FS} Proposition 5.7.
\end{proof}

Hence, if we set for $i=1, \ldots, n$ $y_i' =e_i, x_i' = l_{i1} x_1 + \ldots + l_{in} x_n$, then we have an explicit description of the isomorphism $\theta$ above ($\dagger$):

\begin{proposition} \label{part2}
\[ \theta: P_{n\sigma(M)}^G \rightarrow P_{n\sigma(M)'}, \]
\[ y_i' \mapsto y_i, x_i' \mapsto x_i, i=1, \ldots, n.\]

The extension of this map to the field of fractions gives us an explict isomorphism $\theta: K_n^G \rightarrow K_n$ (cf. lemma \ref{recognition}), and so Theorem \ref{main-1} (2) is proved.
\end{proposition}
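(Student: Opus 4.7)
The plan is to derive Proposition~\ref{part2} by passing to the associated graded of the explicit Weyl algebra isomorphism $D(h_r)^G \simeq D(h_{r'})$ from the preceding proposition, rather than constructing $\theta$ by hand and then verifying it is a Poisson homomorphism. First I would check that the Weyl algebra isomorphism is filtration-preserving in the order filtration: the generators $e_i \in D(h_r)^G$ are order $0$ and map to $x_i \in D(h_{r'})$, also order $0$; while the generators $l_{i1}\partial_1 + \cdots + l_{in}\partial_n$ are order $1$ and map to the order-$1$ elements $\partial_i$. Since these generate $D(h_r)^G$ as an algebra (this is implicit in the preceding proposition giving a well-defined isomorphism), the map preserves the order filtration on all of $D(h_r)^G$. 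Combined with Theorem~\ref{quantization}, which identifies the order-filtered rings $D(h_r)^G$ and $D(h_{r'})$ as filtered quantizations of $P_{n\sigma(M)}^G$ and $P_{n\sigma(M)'}$ respectively, taking associated graded yields a Poisson algebra isomorphism $\mathrm{gr}\,\theta \colon P_{n\sigma(M)}^G \to P_{n\sigma(M)'}$.

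Next, I would verify that this $\mathrm{gr}\,\theta$ coincides with the map described in the statement by computing principal symbols. The order-$0$ generator $e_i \in D(h_r)^G$ has principal symbol equal to $e_i$ itself, interpreted inside $\mathcal{O}(h_r)^G \subset P_{n\sigma(M)}^G$; on the other side, the symbol of the coordinate function $x_i$ on $h_{r'}$ is the corresponding position coordinate, which in the Poisson-Weyl notation on $P_{n\sigma(M)'}$ is $y_i$. Hence $\mathrm{gr}\,\theta(e_i) = y_i$, matching $y_i' \mapsto y_i$. For the order-$1$ generator $l_{i1}\partial_1 + \cdots + l_{in}\partial_n$, the principal symbol inside $P_{n\sigma(M)}^G \subset P_{n\sigma(M)}$ is $l_{i1}x_1 + \cdots + l_{in}x_n$, where the $x_j$ are the cotangent fibre coordinates on $T^*h_r$ arising as symbols of $\partial_j = \partial_{y_j}$; this is precisely $x_i'$. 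Its image is the symbol of $\partial_i \in D(h_{r'})$, which in Poisson-Weyl notation on $P_{n\sigma(M)'}$ is $x_i$. So $\mathrm{gr}\,\theta(x_i') = x_i$, and the two formulas agree.

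Finally, extending $\theta = \mathrm{gr}\,\theta$ to the fields of fractions produces the Poisson isomorphism $K_n^G \simeq \mathrm{Frac}\,P_{n\sigma(M)'} = K_n$; equivalently, one invokes Lemma~\ref{recognition} directly, the required Poisson-Weyl relations among $x_i', y_i'$ being forced by the fact that $\mathrm{gr}\,\theta$ is a Poisson isomorphism carrying them to the standard Poisson-Weyl generators of $P_{n\sigma(M)'}$. There is no serious obstacle here, since once the Weyl-algebra input is accepted the argument is essentially a one-line symbol calculation; the only point needing care is the notational bookkeeping between two distinct sets of Poisson-Weyl coordinates, one on $T^*h_r$ and one on $T^*h_{r'}$, together with the identification of the ``standard Weyl algebra'' generators $x_i \in D(h_{r'})$ with the positional coordinates $e_i$ on $h_{r'}$.
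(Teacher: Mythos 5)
Your proposal is correct and follows essentially the same route as the paper: the paper also obtains the explicit Poisson isomorphism by observing that the explicit isomorphism $D(h_r)^G \simeq D(h_{r'})$ of \cite{FS0}, \cite{FS} is a filtered quantization of $\mathcal{O}(T^*h_r)^G \simeq \mathcal{O}(T^*h_{r'})$ (Theorem \ref{quantization}) and reading off the formulas on symbols. You merely make explicit the filtration-preservation and principal-symbol bookkeeping that the paper leaves implicit.
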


\begin{remark}
Localization is necessary. It is allways false that $P_n^G$ is isomorphic to $P_n$ as a Poisson algebra, for \emph{any} finite group acting linearly diagonally (\cite{AF}, Thm. 4).
\end{remark}

\begin{remark}
If we were only interested in showing that $P_n^G$ is rational (instead of Poisson rational), the proof would follow immediatly from the Chevalley-Shephard-Todd Theorem and \cite{Miyata}, remark 3.
\end{remark}

\subsection{Applications of Poisson Noether's Problem}

We recall the following well known result (cf. \cite{FS}, Prop. 5.10):

\begin{proposition} \label{coxeter-like}
Let $W$ be a finite complex reflection group acting on a vector space $V$. Then $W = W_1 \times \ldots \times  W_m$, $V=V_1 \oplus \ldots \oplus V_m \oplus V'$, where each $W_i$ acts as an irreducible complex reflection group on $V_i$, and trivially on $V_j$, $j \neq i$; and $V'$ is fixed by the whole $W$.
\end{proposition}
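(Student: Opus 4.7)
The plan is to reduce the statement to the classical decomposition theorem for finite complex reflection groups. First, I would isolate the fixed part: set $V' := V^W$, and, using Maschke's theorem, pick a $W$-invariant complement $U$ so that $V = V' \oplus U$. On $U$ the action of $W$ has no nonzero fixed vector, and every reflection in $W$ has its (one-dimensional) nontrivial eigenline contained in $U$, so the restriction of $W$ to $U$ is again a complex reflection group.

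Next I would produce the product decomposition on $U$. Let $\mathcal{R}$ denote the set of complex reflections in $W$, and for each $s \in \mathcal{R}$ let $L_s \subset U$ be its one-dimensional nontrivial eigenspace. Choose a $W$-invariant Hermitian inner product on $V$ and define on $\mathcal{R}$ the equivalence relation generated by $s \sim s'$ whenever $L_s$ and $L_{s'}$ are not orthogonal. Let $\mathcal{R} = \mathcal{R}_1 \sqcup \ldots \sqcup \mathcal{R}_m$ be the classes, and set $W_i := \langle \mathcal{R}_i \rangle$ and $V_i := \mathrm{span}\{L_s : s \in \mathcal{R}_i\}$. Because distinct classes are orthogonal and reflections whose reflecting lines are orthogonal commute, the subgroups $W_i$ pairwise commute, and $W_i$ acts trivially on $V_j$ for $j \neq i$. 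Since the $W_i$ generate $W$ and intersect trivially (each $W_i$ fixes every $V_j$ with $j \neq i$ pointwise, while $W_j$ restricted to $V_j$ has no nonzero fixed vectors), we obtain $W = W_1 \times \ldots \times W_m$ and $U = V_1 \oplus \ldots \oplus V_m$.

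Finally, I would verify that each $W_i$ acts \emph{irreducibly} on $V_i$. Suppose $0 \neq V_i' \subsetneq V_i$ were a $W_i$-invariant subspace; then its orthogonal complement $V_i''$ inside $V_i$ is also $W_i$-invariant, and every reflecting line $L_s$ with $s \in \mathcal{R}_i$ must lie entirely in $V_i'$ or in $V_i''$ (since $s$ fixes the hyperplane $L_s^\perp \cap V_i$ pointwise). This splits $\mathcal{R}_i$ into two subsets whose reflecting lines are mutually orthogonal, contradicting the connectedness of the equivalence class $\mathcal{R}_i$. Hence the restriction of $W_i$ to $V_i$ is irreducible, so $W_i$ is an irreducible complex reflection group on $V_i$, which is exactly the claim.

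The main obstacle here is the last step, the irreducibility of each factor: connecting the combinatorial "non-orthogonality" equivalence on reflections to irreducibility of the corresponding representation needs the observation that a $W_i$-invariant subspace must be a sum of reflecting lines together with their orthogonal complements. Everything else is essentially formal. As the proposition is standard and the paper already cites \cite{FS}, Prop.~5.10, one could alternatively shortcut the argument by simply appealing to this reference (or to Steinberg's classical treatment of complex reflection groups, e.g.\ in Kane's book, \cite{Kane}).
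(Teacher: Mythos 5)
Your argument is correct and essentially complete, but note that the paper itself offers no proof at all for this proposition: it is stated as a ``well known result'' with a bare citation to \cite{FS}, Prop.~5.10 (the fact goes back to Steinberg's treatment of unitary reflection groups; see also \cite{Kane}). So your self-contained proof is doing strictly more work than the paper does. Your route --- splitting off $V^W$, then taking connected components of the non-orthogonality relation on the reflecting lines $L_s$ --- is one of the two standard arguments; the other decomposes $U=V^{W\perp}$ into irreducible $W$-submodules and sorts each reflection $s$ by which summand contains $\mathrm{im}(1-s)$, which has the small advantage that $W$-invariance of the $V_i$ and normality of the $W_i$ are automatic. Two spots in your write-up deserve one extra line each: (i) the identity $U=V_1\oplus\cdots\oplus V_m$ requires observing that the orthogonal complement in $U$ of $\mathrm{span}\{L_s : s\in\mathcal{R}\}$ is fixed by every reflection, hence by $W$, hence is zero; and (ii) the cleanest reason each $L_s$ lies wholly in $V_i'$ or $V_i''$ is that $L_s=\mathrm{im}(1-s)$ is one-dimensional and $1-s$ preserves both summands of the invariant decomposition $V_i=V_i'\oplus V_i''$, so one of the two images must vanish (your parenthetical about $s$ fixing $L_s^\perp\cap V_i$ does not by itself force this). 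With those two remarks added, the proof is airtight; alternatively, the citation you mention at the end is exactly what the paper does.
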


Let us now prove Corollary \ref{corol-1}.

Let $W$ be an arbitrary finite complex reflection group acting diagonally linear on some $P_n$. By the Proposition above $P_n^W \simeq P_{n_1}^{W_1} \otimes \ldots \otimes P_{n_m}^{W_m} \otimes P_k$, $n_1 + \ldots + n_m + k =n$ ($P_k$ corresponds to the part fixed by the whole $W$). An application of Theorem \ref{main-1} gives our result.

The proof of Corollay \ref{corol-2} is similar. Let $\Gamma < SL_2(\C)$. Let $u,v \in K_2^\Gamma$ be two elements such that the map $K_1^\Gamma \rightarrow K_1$ , $u \mapsto x, v \mapsto y$, is an isomorphism of Poisson fields (cf. \cite{Baudry}). In the wreath product case $G=\Gamma \wr S_n$ action in $P_{n}$, consider the elements $u_i,v_i$, $i=1,\ldots, n$ --- a copy of each $u,v$ in the $n$-factos of the action. Then, repeating the procedure (with $u_i,v_i$ instead of $x_i,y_i$, respectively) in the proof of Theorem \ref{main-1} (2), cf. Proposition \ref{part2}, we obtain our result.

\begin{example}
Here we have an example showing how combining Baudry's results and Thereom \ref{main-1} (2) we can explicity exhibit the isomorphisms. Let $\mathbb{D}_n$ be the binary dihedral group of order $4n$ acting on $\C(x_1,y_1)$, $\{x_1,y_1 \}=1$. Then $\C(x_1,y_1)^{\mathbb{D}_n} \simeq \C(u_1,v_1)$, with:

\[ u_1 = 1/8n((x_1^{-1}y_1)^{-n}-(x_1^{-1}y_1)^{n}) (\frac{ (x_1^{-1}y_1)^{n} -1}{(x_1^{-1}y_1)^{n}+1})^2x_1y_1; v_1= (\frac{ (x_1^{-1}y_1)^{n} +1}{(x_1^{-1}y_1)^{n}-1})^2, \]

and $\{u_1,v_1 \} = 1$ (\cite{Baudry}).

Now lets consider the action of the group $G= \mathbb{D}_n \wr S_3$ on $\C(x_1,y_1, x_2, y_2, x_3, y_3)=K_3$. We have an isomorphism $ \psi: K_3^{\mathbb{D}_n} \rightarrow \C(U_1,V_1,U_2,V_2,U_3,V_3)$. $J(M)$ is $(v_1-v_2)(v_2-v_3)(v_3-v_2)$. Now we define $V_1 = v_1 + v_2 + v_3$; $V_2 = v_1 v_2 + v_2 v_3 + v_3 v_1$; $V_3 = v_1 v_2 v_3$; and

\[ U_1 = \frac{v_1^2(v_2-v_3)}{J} u_1 + \frac{v_2^2(v_3-v_1)}{J} u_2 + \frac{v_3^2(v_1-v_2)}{J} u_3; \]
\[ U_2 = \frac{v_1(v_3-v_2)}{J} u_1 + \frac{v_2(v_1 - v_3)}{J} u_2 + \frac{v_3(v_2-v_1)}{J} u_3;\]
\[ U_3 = \frac{(v_2-v_3)}{J} u_1 + \frac{(v_3-v_1)}{J} u_2 + \frac{ (v_1-v_2)}{J}u_3 . \]

We have $\{V_i,V_j \} = \{ U_i, U_j \} = 0, \{ U_i, V_j \} = \delta_{ij}, i,j =1,2,3$, and the explicit isomorphism $ \psi: K_3^{\mathbb{D}_n} \rightarrow \C(U_1,V_1,U_2,V_2,U_3,V_3)$ is $u_i \mapsto U_i, v_i \mapsto V_i$, $i=1,2,3$ (cf. \cite{FS}, Ex. 5.12).

\end{example}
Finally, considering Calogero-Moser spaces, we need the following result from \cite{Etingof}.

\begin{proposition}
$\mathcal{M}_c(W,h)$ is birationally equivalent to $h \oplus h^* / W$ as a Poisson variety.
\end{proposition}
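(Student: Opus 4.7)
The plan is to exhibit $\Frac\, Z_c(W,h)$ explicitly as a Poisson subfield of $\C(h\oplus h^*)$ via the Dunkl-Cherednik representation, and to deduce equality on fraction fields by a dimension count. The first step is to invoke the Satake isomorphism $Z_c(W,h) \simeq e\, H_{0,c}(W,h)\, e$, with $e=\frac{1}{|W|}\sum_{w\in W}w$, which identifies the Poisson center with the spherical subalgebra as Poisson algebras. The Poisson structure on both sides is induced from the commutator in the flat family $e\, H_{t,c}(W,h)\, e$ via the standard quantization recipe $\{a,b\}=\lim_{t\to 0}[\tilde a,\tilde b]/t$, where $\tilde a,\tilde b$ are lifts to nonzero $t$.

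Next, I would pass to the Dunkl representation. After localizing away from the reflection arrangement, the polynomial representation of $H_{t,c}(W,h)$ on $\C[h^{\mathrm{reg}}]$ gives, at $t=0$, a faithful embedding $H_{0,c}(W,h) \hookrightarrow \C[h^{\mathrm{reg}}\times h^*]\rtimes \C W$. Sandwiching by the idempotent $e$ restricts this to a Poisson embedding $e\, H_{0,c}(W,h)\, e \hookrightarrow \C[h^{\mathrm{reg}}\times h^*]^W$, and combining with Satake we obtain a Poisson embedding $\iota: Z_c(W,h) \hookrightarrow \C[h^{\mathrm{reg}}\times h^*]^W$.

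Both $Z_c(W,h)$ and $\C[h^{\mathrm{reg}}\times h^*]^W$ are integral domains of Krull dimension $2\dim h$ --- the former by the structural results of Etingof-Ginzburg on rational Cherednik algebras at $t=0$ (the full algebra $H_{0,c}$ is a finite module over $Z_c$ and has the expected PBW dimension), the latter by an obvious computation. Hence the injective map $\iota$ must be finite, and a finite injective homomorphism between integral domains of equal Krull dimension becomes an equality after passing to fraction fields. This yields the desired Poisson-field isomorphism $\Frac\, Z_c(W,h) \simeq \C(h\oplus h^*)^W = \Frac(\C[h\oplus h^*]^W)$, which is exactly the claimed birational Poisson equivalence.

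The step I expect to be the main obstacle is verifying that the Dunkl embedding respects the \emph{Poisson} structure and not merely the associative algebra structure: one must check that the bracket on $Z_c(W,h)$ coming from the $t$-deformation of $H_{t,c}$ matches the restriction of the canonical symplectic bracket on $h\oplus h^*$ to $W$-invariants. This is handled by observing that the Dunkl embedding extends to a morphism of flat $t$-families $H_{t,c}(W,h)\hookrightarrow \mathcal{A}_t(h^{\mathrm{reg}})\rtimes \C W$, where $\mathcal{A}_t$ interpolates between $D(h^{\mathrm{reg}})$ at $t=1$ and $\C[h^{\mathrm{reg}}\times h^*]$ at $t=0$; since the quantization on the target induces, by construction, the standard symplectic Poisson bracket on $T^*h^{\mathrm{reg}}$, compatibility of the $t$-families forces $\iota$ to be a Poisson morphism, and the conclusion follows.
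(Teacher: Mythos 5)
The paper itself does not prove this statement; it simply cites \cite{Etingof}, Proposition 17.7, so your attempt to supply an actual argument is welcome. Your first two steps (Satake isomorphism $Z_c(W,h)\simeq e H_{0,c}(W,h)e$ as Poisson algebras, and the classical Dunkl embedding of the spherical subalgebra into $\C[h^{\mathrm{reg}}\times h^*]^W$, with Poisson compatibility checked via the flat $t$-family) are sound and are essentially the route taken in Etingof--Ginzburg.

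The final step, however, contains a genuine gap. Neither of the two implications you use there is valid: an injection of finitely generated domains of equal Krull dimension need not be finite (e.g.\ $\C[x]\hookrightarrow\C[x,x^{-1}]$), and, more seriously, a \emph{finite} injection of domains of equal Krull dimension does not become an equality on fraction fields --- it only gives a finite field extension, which may be proper. The inclusion $\C[x^2]\hookrightarrow\C[x]$ is finite, injective, and dimension-preserving, yet $\C(x^2)\subsetneq\C(x)$. So your dimension count can only show that $\Frac\,Z_c(W,h)$ sits inside $\C(h\oplus h^*)^W$ as a subfield over which the latter is algebraic; it cannot rule out a nontrivial finite extension, which is exactly the point at issue.

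What is actually needed is \emph{surjectivity} of the localized Dunkl embedding. Let $\delta=\prod_{s\in S}\alpha_s$ (or a suitable $W$-invariant power). Each classical Dunkl element differs from the corresponding linear function $y\in h\subset\C[h^*]$ by a correction term lying in $\C[h^{\mathrm{reg}}]\otimes\C W$; since $H_{0,c}(W,h)[\delta^{-1}]$ contains $\C[h^{\mathrm{reg}}]$ and $\C W$, one can subtract these corrections and recover every $y\in h$. Hence the localized embedding $H_{0,c}(W,h)[\delta^{-1}]\to\C[h^{\mathrm{reg}}\times h^*]\rtimes\C W$ is onto, and sandwiching by $e$ gives $e H_{0,c}(W,h)e[\delta^{-1}]\simeq\C[h^{\mathrm{reg}}\times h^*]^W$ on the nose. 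Combined with Satake this yields an isomorphism of $Z_c(W,h)[\delta^{-1}]$ with $\C[h^{\mathrm{reg}}\times h^*]^W$, i.e.\ an isomorphism of dense open Poisson subvarieties, which is the claimed Poisson birational equivalence. With this replacement for your last paragraph the argument closes.
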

\begin{proof}
\cite{Etingof}, Proposition 17.7*
\end{proof}

Combining this result with Theorem \ref{main-1}, we immediatly obtain Theorem \ref{main-2}.

We finish this section with a Conjecture:

\begin{conjecture}
For every finite symplectic reflection group $\Gamma$, Poisson Noether's Problem for the  Poisson-Weyl algebra holds.
\end{conjecture}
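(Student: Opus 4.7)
The plan is to reduce, via the Guralnick--Saxl classification recalled in Section 2.1, to the exceptional indecomposable symplectic triples, which are the only cases not already treated in this paper. First I would check that Poisson rationality of $K_n^{\Gamma}$ is preserved under orthogonal decomposition of symplectic triples: if $(V,\omega,\Gamma) = \bigoplus_i (V_i, \omega_i, \Gamma_i)$, then $\Gamma = \prod_i \Gamma_i$ and $P_n^{\Gamma} \simeq \bigotimes_i P_{n_i}^{\Gamma_i}$ as Poisson algebras, because the brackets between distinct factors vanish. Poisson rationality is stable under such tensor products and under localizations, so it suffices to handle indecomposable triples.

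For the two infinite families of indecomposable triples in the classification --- complex reflection groups acting diagonally on $h \oplus h^*$, and wreath products $K \wr S_n$ with $K < SL_2(\mathbb{C})$ --- the conjecture already follows from Corollaries \ref{corol-1} and \ref{corol-2}. The remaining cases are the finite list of exceptional indecomposable triples, and for these I would attempt to apply the abstract framework of Theorem \ref{quasi-classical}: namely, for each exceptional $(V,\omega,\Gamma)$, try to exhibit a smooth affine variety $Y$ of dimension $n = (\dim V)/2$ together with a $\Gamma$-free smooth open $X \subset V$ and a birational identification $X/\Gamma \dashrightarrow Y$, whereupon Theorem \ref{new-main} promotes this to a Poisson birational equivalence $T^*(X)/\Gamma \simeq T^*Y$ and gives Poisson rationality of $K_n^{\Gamma}$ directly.

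The main obstacle is precisely these exceptional triples. Since most of the associated quotients $V/\Gamma$ do not admit symplectic resolutions (cf. \cite{Bellamy}), Gordon's symplectic-resolution approach is unavailable; and because there is no analogue of Chevalley--Shephard--Todd in the purely symplectic setting, the explicit Jacobian construction of Section 3.1 has no direct generalization, leaving us without an obvious family of algebraically independent Poisson-commuting invariants to use as candidate coordinates $y_1',\ldots,y_n'$. A full proof will likely require either a new general invariant-theoretic input producing a Poisson-Lagrangian set of invariants --- essentially a ``symplectic Noether normalization'' --- or a delicate case-by-case analysis of the exceptional list. In the latter approach even the rationality of $V/\Gamma$ (ignoring the Poisson structure) does not appear to be known in full generality, so the first step in each exceptional case would have to be to settle the classical Noether problem before attacking the Poisson refinement.
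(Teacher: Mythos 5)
The statement you are addressing is stated in the paper as a \emph{conjecture}, and the paper offers no proof of it; it is left open precisely because of the obstacle you yourself identify. Your reduction is sound as far as it goes: the decomposition of a symplectic triple into indecomposable ones, the compatibility of Poisson rationality with the resulting tensor-product decomposition of $P_n^\Gamma$, and the observation that the two infinite families in the classification of \cite{GS} are already covered by Corollaries \ref{corol-1} and \ref{corol-2} --- this is exactly the sense in which the paper says its results cover ``essentially all'' indecomposable symplectic reflection groups. But what remains after that reduction, the finite list of exceptional indecomposable triples, is the entire content of the conjecture beyond what the paper already proves, and your proposal does not resolve a single one of these cases. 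Invoking Theorem \ref{quasi-classical} only shifts the burden: to apply it you would need to know that $V/\Gamma$ is (birationally) rational for each exceptional $\Gamma$, and as you concede, even this classical rationality question is not settled there; nor is it clear that Theorem \ref{quasi-classical} would suffice even then, since for a general symplectic $\Gamma$-action the quotient $T^*(X)/\Gamma$ need not be of the form $T^*(X)/\Gamma$ for an action induced from a base $X$ of half the dimension --- the diagonal structure $V = h \oplus h^*$ that makes Section 3.1 and Theorem \ref{quasi-classical} applicable is special to the complex-reflection-group case and is exactly what fails for a genuinely symplectic (non-cotangent-type) exceptional triple.

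In short: your write-up is an accurate diagnosis of why the conjecture is open (no Chevalley--Shephard--Todd analogue, no symplectic resolutions in general by \cite{Bellamy}, no candidate Lagrangian family of invariants), but it is a research program, not a proof. Since the paper itself leaves the statement as a conjecture, there is no gap relative to the paper --- but you should present this as a discussion of the conjecture's status rather than as a proof attempt.
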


\section{Birational Equivalence and Poisson Birational Equivalence}

In this section we show that birational equivalence of quotient varieties implies Poisson birational equivalence of the quotient for the induced action of the group on the cotangent bundles.

\begin{proposition}\label{birat}
Let $X,Y$ be two affine smooth varieties, birationally equivalent. Then $\mathcal{O}(T^*X)$ and $\mathcal{O}(T^*Y)$ have isomorphic Poisson field of fractions.
\begin{proof}

Since $X$ and $Y$ are birationally equivalent, there exists $U \subset X$, $V \subset Y$ open subsets that are isomorphic as varieties. We can suppose both of then affine: $U= Spec \, A, V= Spec \, B$, $\mathcal{O}(X) \subset A$, $\mathcal{O}(Y) \subset B$. Then we have $T^*U \simeq T^*V$ as Poisson varieties. Since we have $\mathcal{O}(T^*X) \subset \mathcal{O}(T^*U); \, \mathcal{O}(T^*Y) \subset \mathcal{O}(T^*V)$  as Poisson subalgebras with the same field of fractions, we are done.
\end{proof}
\end{proposition}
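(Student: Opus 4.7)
The plan is to reduce the claim to the standard fact that an isomorphism of smooth affine varieties induces an isomorphism of their cotangent bundles as symplectic (hence Poisson) varieties, and then to observe that passing from $X$ to an affine open $U \subset X$ changes $\mathcal{O}(T^*X)$ only by a localization, which leaves the field of fractions invariant.

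First I would unpack birational equivalence: since $X$ and $Y$ are birationally equivalent, there exist nonempty open subsets $U \subset X$ and $V \subset Y$ with $U \simeq V$ as varieties. Shrinking if necessary, I may take $U$ and $V$ to be principal (basic) affine opens, say $U = \Spec A$ and $V = \Spec B$, where $A$ is a localization of $\mathcal{O}(X)$ at some nonzero $f \in \mathcal{O}(X)$ and $B$ a localization of $\mathcal{O}(Y)$ at some $g \in \mathcal{O}(Y)$. Both $U$ and $V$ remain smooth since smoothness is preserved under open immersion.

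Next, the cotangent bundle is functorial with respect to open immersions of smooth varieties, and in fact $T^*U$ is the open subscheme of $T^*X$ sitting above $U$; concretely, $\mathcal{O}(T^*U)$ is obtained from $\mathcal{O}(T^*X)$ by inverting the image of $f$ (viewed as a function pulled back along the bundle projection $T^*X \to X$). The natural inclusion $\mathcal{O}(T^*X) \hookrightarrow \mathcal{O}(T^*U)$ is then a Poisson embedding, because the canonical symplectic form on $T^*X$ restricts to the canonical symplectic form on $T^*U$. Since a Poisson localization has the same field of fractions as the original Poisson algebra, $\Frac \, \mathcal{O}(T^*X) \simeq \Frac \, \mathcal{O}(T^*U)$ as Poisson fields, and similarly $\Frac \, \mathcal{O}(T^*Y) \simeq \Frac \, \mathcal{O}(T^*V)$.

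Finally, an isomorphism of smooth affine varieties $U \simeq V$ induces, by functoriality of the cotangent bundle on isomorphisms, a symplectic isomorphism $T^*U \simeq T^*V$, hence a Poisson isomorphism $\mathcal{O}(T^*U) \simeq \mathcal{O}(T^*V)$. Stringing together the three isomorphisms gives $\Frac \, \mathcal{O}(T^*X) \simeq \Frac \, \mathcal{O}(T^*Y)$ as Poisson fields. The only subtle point worth emphasizing is the compatibility of cotangent bundle formation with Poisson structure under open immersion, but this is precisely the statement that the canonical $1$-form (and hence its differential, the symplectic form) is natural with respect to étale maps; no real obstacle arises.
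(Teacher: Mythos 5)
Your argument is correct and follows essentially the same route as the paper: pass to isomorphic affine opens $U\subset X$, $V\subset Y$, identify $T^*U\simeq T^*V$ as Poisson varieties, and note that $\mathcal{O}(T^*X)\subset\mathcal{O}(T^*U)$ have the same field of fractions. You additionally justify this last inclusion by shrinking to principal opens and invoking $\mathcal{O}(T^*X)_f\simeq\mathcal{O}(T^*X_f)$, which is precisely the content the paper isolates later as Lemma \ref{lemma-new}, so your write-up is if anything slightly more self-contained.
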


In case of smooth curves, we have a converse:

\begin{proposition}
Let $X, Y$ be two affine smooth curves, sucht that $\mathcal{O}(T^*X)$ and $\mathcal{O}(T^*Y)$ have isomorphic Poisson field of fractions. Then they are birationally equivalent.
\begin{proof}
Let $Z$ be an affine smooth variety of dimension $n$. Since the sheaf of Khaler differentials is locally free, the contangent bundle is locally trivial --- namely, there is a dense affine open set $U \subset Z$ such that $T^*(U) = \mathcal{O}(U)[\xi_1, \ldots, \xi_n]$; and $Frac \, \mathcal{O}(Z) = Frac \, \mathcal{O}(U)$.  So, if $X$ and $Y$ are two curves whose cotangent bundles are birationally equivalent, then $Frac \, \mathcal{O}(X)(\xi) \simeq Frac \, \mathcal{O}(Y)(\xi')$. So the two curves are stably birational equivalent (cf. \cite{JJ}), and hence birationally equivalent (\cite{Hartshorne}, V, Exercise 2.1).
\end{proof}
\end{proposition}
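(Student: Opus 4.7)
The plan is to forget the Poisson structure entirely and work with the underlying field isomorphism: a Poisson isomorphism of Poisson fields is, in particular, an isomorphism of fields. The strategy is then to identify $\mathrm{Frac}\,\mathcal{O}(T^*Z)$ with a purely transcendental extension of $\mathrm{Frac}\,\mathcal{O}(Z)$, so that the hypothesis translates into stable birational equivalence of the curves, which for curves is equivalent to birational equivalence.

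First I would establish the local triviality of the cotangent bundle of an affine smooth variety $Z$ of dimension $n$. Since the sheaf of K\"ahler differentials $\Omega^1_Z$ is locally free of rank $n$, one can choose a distinguished open affine $U \subset Z$ on which $\Omega^1_U$ is free. Concretely, if $A = \mathcal{O}(Z)$, then there exist $a_1,\dots,a_n \in A$ and $f \in A \setminus \{0\}$ such that $\{da_i\}$ form a free basis of $\Omega^1_{A_f}$ over $A_f$. Writing $\xi_i$ for the dual basis, one obtains
\[
\mathcal{O}(T^*U) \;\cong\; A_f[\xi_1,\dots,\xi_n]
\]
as an $A_f$-algebra. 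In particular, passing to fields of fractions (and using that $\mathrm{Frac}\,A_f = \mathrm{Frac}\,A$) yields
\[
\mathrm{Frac}\,\mathcal{O}(T^*Z) \;=\; \mathrm{Frac}\,\mathcal{O}(T^*U) \;=\; \mathrm{Frac}\,\mathcal{O}(Z)(\xi_1,\dots,\xi_n),
\]
a purely transcendental extension in $n$ variables. Note that this is an assertion only about the underlying ring/field structure, not the Poisson structure, which is exactly what we need.

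Applying this to the two smooth affine curves $X$ and $Y$ (so $n=1$), the assumed Poisson field isomorphism restricts to a field isomorphism
\[
\mathrm{Frac}\,\mathcal{O}(X)(\xi) \;\cong\; \mathrm{Frac}\,\mathcal{O}(Y)(\xi').
\]
By definition, this means $X$ and $Y$ are stably birationally equivalent. The main (and only nontrivial) obstacle is then the classical result that for smooth curves, stable birational equivalence implies birational equivalence. This follows from the theory of ruled surfaces: if $k(X)(\xi) \cong k(Y)(\xi')$, both fields are function fields of birationally ruled surfaces over $X$ and over $Y$ respectively, and the base of the ruling is a birational invariant of the surface (up to isomorphism of curves), which is the content of the exercise in Hartshorne V, 2.1 cited by the author. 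Invoking this, one concludes that $X$ and $Y$ are birationally equivalent, completing the proof.
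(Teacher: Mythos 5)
Your proposal is correct and follows essentially the same route as the paper: trivialize the cotangent bundle over a dense affine open, identify $\mathrm{Frac}\,\mathcal{O}(T^*Z)$ with a purely transcendental extension of $\mathrm{Frac}\,\mathcal{O}(Z)$, deduce stable birational equivalence of the curves, and conclude by the ruled-surface argument of Hartshorne V, Exercise 2.1. The only difference is that you spell out the last step in slightly more detail than the paper does.
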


It seems an interesting question to understand whether the converse of Proposition \ref{birat} holds for varieties of higher dimension.

\begin{lemma}\label{lemma-new}
Let $X$ be an affine smooth variety, $0 \neq f \in \mathcal{O}(X), X_f= Spec \, \mathcal{O}(X)_f$. $\mathcal{O}(T^*X)_f \simeq \mathcal{O}(T^*X_f)$.
\end{lemma}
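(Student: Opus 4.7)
The plan is to realize $\mathcal{O}(T^*X)$ as a symmetric algebra of vector fields, and then to exploit the compatibility of symmetric algebras with localization. For any affine smooth variety $Z$, one has the standard identification
\[
\mathcal{O}(T^*Z) \;\simeq\; {\rm Sym}_{\mathcal{O}(Z)}\bigl(\Der(\mathcal{O}(Z))\bigr),
\]
since $T^*Z$ is the total space of the cotangent bundle, whose sheaf of sections is $\Omega_Z$, with dual the tangent sheaf $\Der(\mathcal{O}_Z)$. I would begin the argument by recording this for both $X$ and $X_f$.

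With this identification in place, I would invoke two elementary compatibilities. First, every derivation of $\mathcal{O}(X)$ extends uniquely to a derivation of the localization $\mathcal{O}(X)_f$ via the quotient rule, so that there is a canonical isomorphism of $\mathcal{O}(X)_f$-modules $\Der(\mathcal{O}(X)_f) \simeq \Der(\mathcal{O}(X)) \otimes_{\mathcal{O}(X)} \mathcal{O}(X)_f$. Second, the symmetric algebra functor commutes with base change: for any $A$-module $M$ and $A$-algebra $B$, one has ${\rm Sym}_B(M \otimes_A B) \simeq {\rm Sym}_A(M) \otimes_A B$. Chaining these two facts,
\[
\mathcal{O}(T^*X_f) \simeq {\rm Sym}_{\mathcal{O}(X)_f}\bigl(\Der(\mathcal{O}(X)) \otimes_{\mathcal{O}(X)} \mathcal{O}(X)_f\bigr) \simeq {\rm Sym}_{\mathcal{O}(X)}\bigl(\Der(\mathcal{O}(X))\bigr) \otimes_{\mathcal{O}(X)} \mathcal{O}(X)_f \simeq \mathcal{O}(T^*X)_f,
\]
which delivers the algebra isomorphism.

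There is no serious technical obstacle, only the need to verify that the constructed isomorphism is Poisson (as is surely intended, since both objects are Poisson algebras used throughout the paper). This is automatic: the Poisson bracket on either side is determined by the rules $\{a,b\}=0$ for $a,b$ in the base ring, and $\{\xi,a\}=\xi(a)$ for a derivation $\xi$ acting on a function $a$. Since the canonical extension of a derivation to a localization is governed by the same quotient rule as the Poisson-localization formula recalled at the beginning of the paper, the two Poisson structures must coincide under the isomorphism.
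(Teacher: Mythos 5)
Your argument is correct and is essentially the paper's own: the paper likewise identifies $\mathcal{O}(T^*Y)$ with ${\rm Sym}_{\mathcal{O}(Y)}\Theta_Y$ and cites the fact that vector fields on $X_f$ are the localization $\mathcal{O}(X)_f \otimes_{\mathcal{O}(X)} \Theta_X$ (McConnell--Robson 15.1.24), with the compatibility of ${\rm Sym}$ and base change doing the rest. Your additional remark verifying that the isomorphism respects the Poisson brackets is a welcome (if routine) supplement that the paper leaves implicit.
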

\begin{proof}
Let $\Theta_X$ be the Lie algebra of vector fields on $X$. It is an $\mathcal{O}(X)$-module. It is well known that $\Theta_{X_f}$ is isomorphic to $\mathcal{O}(X)_f \otimes_{\mathcal{O}(X)} \Theta_{X_f}$ as an $\mathcal{O}(X)_f$-module (\cite{MR}, 15.1.24). Since, for an affine smooth variety $Y$, $\mathcal{O}(T^*Y) = Sym_{\mathcal{O}(Y)} \Theta_Y$, we are done.
\end{proof}

\begin{theorem}\label{quasi-classical}
Let $G$ be a finite group acting on an affine smooth variety $X$, such that $X/G$ is birationally equivalent to an affine smooth variety $Y$. Then $\mathcal{O}(T^*X)^G$ and $\mathcal{O}(T^*Y)$ have isomorphic Poisson field of fractions.
\begin{proof}
There exists a principal open subset of $X$, $X_f$, where the action of $G$ is free (cf. lemma 4.4 \cite{FS}). By Lemma \ref{lemma-new}, we have $(\mathcal{O}(T^*X)^G)_f =(\mathcal{O}(T^*X)_f)^G \simeq \mathcal{O}(T^*X_f)^G$. By Theorem \ref{new-main}, this is isomorphic to $\mathcal{O}(T^*X_f/G)$. By Proposition \ref{birat}, the Poisson field of fractions of $\mathcal{O}(T^*X_f/G)$ is the same as the one of $\mathcal{O}(T^*Y)$. But the former has the same Poisson field of fractions as $\mathcal{O}(T^*X)^G$ up to isomorphism.
\end{proof}
\end{theorem}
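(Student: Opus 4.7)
The plan is to reduce to the case of a free action by passing to a suitable principal open subset, and then chain together the three results already established earlier in the section.

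First, I would locate an element $0 \neq f \in \mathcal{O}(X)^G$ (or at least an $f$ whose orbit under $G$ behaves well) such that the principal open subset $X_f = \Spec \mathcal{O}(X)_f$ has a free $G$-action. The existence of such an $f$ is a standard fact for finite groups acting on affine varieties (the generic stabilizer is trivial, and one can cut out the locus with nontrivial stabilizer by a $G$-stable principal open); this is the analogue of Lemma 4.4 of \cite{FS} cited elsewhere in the paper. Once this is in hand, the key identification is
\[ (\mathcal{O}(T^*X)^G)_f \;=\; (\mathcal{O}(T^*X)_f)^G \;\simeq\; \mathcal{O}(T^*X_f)^G, \]
where the first equality uses that taking $G$-invariants commutes with localization at a $G$-invariant element, and the second uses Lemma \ref{lemma-new} applied to $X$ and $f$.

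Next, I would invoke Theorem \ref{new-main}: since $G$ acts freely on the smooth affine variety $X_f$, there is a Poisson isomorphism $T^*(X_f)/G \simeq T^*(X_f/G)$, which on the level of regular functions yields
\[ \mathcal{O}(T^*X_f)^G \;\simeq\; \mathcal{O}(T^*(X_f/G)). \]
Now $X_f/G$ is an open subset of $X/G$, hence birationally equivalent to $X/G$, hence birationally equivalent to $Y$ by hypothesis. By Proposition \ref{birat}, it follows that $\mathcal{O}(T^*(X_f/G))$ and $\mathcal{O}(T^*Y)$ have isomorphic Poisson fields of fractions. Since passing from $\mathcal{O}(T^*X)^G$ to its localization $(\mathcal{O}(T^*X)^G)_f$ does not change the field of fractions, we obtain the desired isomorphism of Poisson fields of fractions between $\mathcal{O}(T^*X)^G$ and $\mathcal{O}(T^*Y)$.

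The main obstacle I expect is to verify cleanly that $X_f/G$ is in fact affine smooth (so that Proposition \ref{birat} applies directly); the smoothness comes from freeness of the $G$-action on $X_f$ together with smoothness of $X$, and affineness from $X_f$ being affine and the quotient by a finite group of an affine variety being affine. A secondary technical point is the compatibility of localization with the Poisson bracket and with the $G$-action, but this is built into the formula for the extended bracket quoted in the introduction and follows from the fact that $f$ is $G$-invariant. Once these are dispatched, the proof is essentially a four-step chain: localize $\Rightarrow$ Lemma \ref{lemma-new} $\Rightarrow$ Theorem \ref{new-main} $\Rightarrow$ Proposition \ref{birat}.
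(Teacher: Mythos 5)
Your proposal is correct and follows essentially the same route as the paper's own proof: localize at a $G$-invariant $f$ so the action on $X_f$ is free, identify $(\mathcal{O}(T^*X)_f)^G \simeq \mathcal{O}(T^*X_f)^G$ via Lemma \ref{lemma-new}, pass to $\mathcal{O}(T^*(X_f/G))$ by Theorem \ref{new-main}, and conclude with Proposition \ref{birat}. Your added remarks on the affineness and smoothness of $X_f/G$ are a welcome clarification of a point the paper leaves implicit.
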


Theorem \ref{main-1} (1) is particular case of this last result, since by Chevalley-Shephard-Todd Theorem, the quotient of an affine space by a finite complex reflection group is birational to the affine space itself.

\subsection{Coulomb branches and trigonometric Cherednik algebras}

In this subsection, as applications of the above results, we prove Theorems \ref{main-3} and \ref{main-4}.

First we consider Coulomb branches.

\begin{theorem}
Let $H$ be a maximal torus of $G$. $\mathcal{M}_C(G,M)$ is birationally equivalent to $T^*(H^\vee/W)$ as a Poisson variety.
\end{theorem}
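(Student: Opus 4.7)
The plan is to reduce the general reductive case to the case where $G$ is replaced by its maximal torus $H$ via the BFN abelianization, and then apply Theorem \ref{new-main} to convert the resulting $W$-quotient of $T^*H^\vee$ into the cotangent bundle of the Weyl quotient.

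First, I would invoke the abelianization results of \cite{BFN} (see e.g.\ their Sections 5 and 6): there is a canonical embedding of $\mathcal{O}(\mathcal{M}_C(G,M))$ into $\mathcal{O}(\mathcal{M}_C(H,M))^W$ compatible with the Poisson brackets (both arising as the leading term in the filtered deformation by the loop-rotation equivariant parameter), which becomes an isomorphism after inverting the root functions. Since passing to fields of fractions absorbs any such localization, this yields a Poisson isomorphism
\[ \Frac\,\mathcal{O}(\mathcal{M}_C(G,M)) \cong \bigl(\Frac\,\mathcal{O}(\mathcal{M}_C(H,M))\bigr)^W. \]

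Next, I would verify that the abelian Coulomb branch is Poisson birational to $T^*H^\vee$. When $G=H$ is a torus, $\mathcal{M}_C(H,M)$ is a hypertoric variety of dimension $2\dim H$, and its BFN presentation exhibits $\mathcal{O}(\mathcal{M}_C(H,M))$ as generated by the coordinates on $\mathfrak{h}^*$ together with modified monopole elements labelled by the cocharacters of $H^\vee$, whose relations are twisted by monomials in the $H$-weights of $N$. On the open subset where none of these weights vanish, the twists are invertible and the algebra identifies with $\mathcal{O}(\mathfrak{h}^*\times H^\vee) = \mathcal{O}(T^*H^\vee)$ with its standard symplectic bracket, so $\Frac\,\mathcal{O}(\mathcal{M}_C(H,M)) \cong \Frac\,\mathcal{O}(T^*H^\vee)$. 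Combining with the previous step, and applying Theorem \ref{new-main} to the regular locus $H^\vee_{\mathrm{reg}}\subset H^\vee$ on which $W$ acts freely, together with Proposition \ref{birat}, we obtain
\[ \Frac\,\mathcal{O}(\mathcal{M}_C(G,M)) \cong \bigl(\Frac\,\mathcal{O}(T^*H^\vee)\bigr)^W \cong \Frac\,\mathcal{O}(T^*(H^\vee/W)), \]
which is the claim.

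The main obstacle is the first step: BFN's abelianization is delivered at the level of associative convolution algebras, and while Poisson compatibility is built into the deformation-theoretic framework, one should confirm that the localization at the root functions preserves the filtration whose associated graded carries the Poisson bracket. For representations of cotangent type this is transparent in the BFN setup, because the equivariant loop-rotation parameter plays the role of the quantization parameter and the localization is performed in the center of the filtered algebra.
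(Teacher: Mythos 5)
The paper's own ``proof'' of this statement is a one-line citation of \cite{BFN}, Corollary 5.21, so your proposal differs in kind: you are reconstructing the argument that Braverman--Finkelberg--Nakajima themselves give rather than deferring to it. Your outline --- abelianization/localization in equivariant Borel--Moore homology to reduce to the maximal torus, identification of the abelian Coulomb branch with $T^*H^\vee$ after inverting the relevant equivariant parameters, and then passage from $\bigl(\Frac\,\mathcal{O}(T^*H^\vee)\bigr)^W$ to $\Frac\,\mathcal{O}(T^*(H^\vee/W))$ using the free $W$-action on the regular locus --- is consistent with how the result is proved in Section 5 of \cite{BFN}, and your last step is exactly the mechanism this paper packages as Theorem \ref{new-main}, Proposition \ref{birat} and Theorem \ref{quasi-classical}. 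What the bare citation buys is that all the genuinely hard content (the localization theorem for $H_*^{G_\mathcal{O}}$ of the triple moduli space, the Poisson compatibility of the resulting embedding, and the explicit description of the localized algebra as $W$-invariants of functions on $\mathfrak{t}\times H^\vee$) is already established there; in your sketch these same facts enter as assertions to be invoked, so your argument is a proof modulo citing essentially the same portions of the same source. Two small imprecisions worth noting: the localization inverts not only the root functions but also the equivariant weights of $N$ appearing in the monopole relations (harmless here, since you pass to fields of fractions anyway), and the identification of the abelian Coulomb branch with a hypertoric variety is itself a nontrivial theorem rather than something read off from the BFN presentation. Neither affects the conclusion, and your explicit final step converting $(T^*H^\vee)/W$ into $T^*(H^\vee/W)$ is a useful clarification that the paper leaves implicit in the statement it quotes.
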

\begin{proof}
\cite{BFN}, Corol. 5.21.
\end{proof}

This already implies that the birational equivalence class of the Coulomb branch depends only on $G$. The quotient of $H^\vee$ by $W$, since the Weyl group acts by automorphisms of algebraic groups on $H^\vee$, is again an affine connected algebraic group; a connected linear algebraic group. Connected linear algebraic groups over the complex numbers are all rational, as is well known. Hence $H^\vee/W$ is a rational variety. Hence, by Proposition \ref{birat}, we have:

\begin{itemize}
\item[]
For every Coulomb branch, if $\mathcal{A} = \mathcal{O}(\mathcal{M}_C(G,M))$, then the Poisson field of fractions of $\mathcal{A}$ is isomorphic to $K_n = \mathbb{C}(x_1, \ldots, x_s, y_1, \ldots, y_s)$, where $s = rk \, H^\vee$. In other words, the Coulomb branch is Poisson rational.
\end{itemize}

This proves Theorem \ref{main-3}

Now to the proof of Theorem \ref{main-4}. In fact, we are going to show a slightly more general result, about the Cherednik algebra of $X$ an affine variety and $G$ a finite group acting on it (cf. Definition \ref{genCherednik}).

\begin{theorem}
\emph{\cite{Etingof3}}
$U_{t,c}(X,G)$, when $t \neq 0$, is an Ore domain whose quotient ring of fractions is isomorphic to $D(X)^G$. $\mathcal{M}(X,G)$ is Poisson birationally equivalent to $(T^* X)/G$.
\begin{proof}
\cite{Etingof3}, Proposition 2.15 and Theorem 2.29.
\end{proof}
\end{theorem}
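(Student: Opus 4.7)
The plan is to treat the two statements separately but with the same underlying strategy: localize to a $G$-stable open subset on which the action is free and the Dunkl--Opdam operators become well behaved, and then use rigidity of filtered/Poisson structures.

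For the first assertion, I would begin by observing that by construction $H_{t,c}(X,G)$ sits inside $D_r(X) \rtimes G$. Let $\delta \in \mathcal{O}(X)$ be a $G$-invariant function that cuts out the union of all reflection hypersurfaces; on $X_\delta$ the action of $G$ is free and all the rational coefficients $f_Y$ appearing in the Dunkl--Opdam operators are regular. The key step is to show that, after inverting $\delta$, the Dunkl--Opdam operators span the Lie derivative part modulo $\mathbb{C} G \ltimes \mathcal{O}(X_\delta)$, so that together with $\mathcal{O}(X_\delta)$ and $G$ they generate $D(X_\delta) \rtimes G$. This gives a chain of localizations $H_{t,c}(X,G) \subset H_{t,c}(X,G)[\delta^{-1}] = D(X_\delta) \rtimes G$, and in particular these three algebras share a common ring of fractions. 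Multiplying by the idempotent $e$ on both sides and invoking Theorem \ref{CH} for the free action on $X_\delta$ yields $\mathrm{Frac}\, U_{t,c}(X,G) = e\,\mathrm{Frac}(D(X_\delta) \rtimes G)\,e = \mathrm{Frac}\, D(X_\delta/G) = \mathrm{Frac}\, D(X)^G$, once one checks the standard Ore/denominator argument allowing one to pass freely between these fraction rings (Goldie's theorem applies since all the algebras involved are Noetherian domains).

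For the second assertion, the plan is to quantize with parameter $\hbar$: consider the Rees algebra of $H_{t,c}(X,G)$ with respect to its natural filtration and specialize, so that the classical limit of $H_{t,c}(X,G)$ is $\mathcal{O}(T^*X) \rtimes G$ and the classical limit of the center $Z(H_{0,c}(X,G))$ is a Poisson subalgebra of $(\mathcal{O}(T^*X))^G$. After inverting $\delta$ as above, the family becomes generically the trivial family $\mathcal{O}(T^*X_\delta) \rtimes G$, whose center is $(\mathcal{O}(T^*X_\delta))^G = \mathcal{O}(T^*X_\delta / G)$ with its canonical Poisson bracket (here one uses Theorem \ref{new-main}). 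Pulling back along the inclusion $Z(H_{0,c}(X,G)) \hookrightarrow Z(H_{0,c}(X,G))[\delta^{-1}]$, this exhibits a Poisson open embedding $T^*X_\delta / G \hookrightarrow \mathcal{M}_c(X,G)$, which is exactly the Poisson birational equivalence between $\mathcal{M}_c(X,G)$ and $(T^*X)/G$.

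The main obstacle I anticipate is the first step of the first assertion: proving that the Dunkl--Opdam operators, when combined with $\mathcal{O}(X)[\delta^{-1}]$ and $G$, really do generate the full ring $D(X_\delta) \rtimes G$ and not a proper subring. Concretely, one must solve an invertibility problem for the map sending a vector field $v$ to its Dunkl--Opdam operator $D$, showing that modulo the group-algebra and function-algebra terms one can recover $L_v$ for every $v$. This boils down to the surjectivity of the residue map $\xi_Y$ in ($\ddagger$) after localization and a linear-algebra calculation at each reflection hypersurface, and it is the technical heart of \cite{Etingof3}; everything else is a formal consequence together with standard Morita theory and the filtered-to-graded dictionary of Theorem \ref{quantization}.
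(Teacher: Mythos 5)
The paper offers no argument of its own here: the ``proof'' is a bare citation of \cite{Etingof3} (Prop.~2.15 and Thm.~2.29). Your sketch is essentially a reconstruction of Etingof's argument, and the overall strategy is the right one: invert a $G$-invariant equation $\delta$ of the union of the reflection hypersurfaces so that each Dunkl--Opdam operator becomes $tL_v$ modulo $\mathcal{O}(X_\delta)\rtimes G$, identify the localized Cherednik algebra with $D(X_\delta)\rtimes G$ (resp.\ with $\mathcal{O}(T^*X_\delta)\rtimes G$ at $t=0$), and then pass to rings of fractions, resp.\ centers. Three adjustments are needed. First, removing the reflection hypersurfaces does \emph{not} make the action free: points whose stabilizer has fixed locus of codimension at least two survive in $X_\delta$. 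So you cannot invoke Theorem \ref{CH} on $X_\delta$ as written; but you do not need it, since $e(D(X_\delta)\rtimes G)e\simeq D(X_\delta)^G$ holds for any faithful action, and $\Frac\, D(X_\delta)^G=\Frac\, D(X)^G$ is all the first assertion requires (alternatively, shrink further to a free locus as in the proof of Theorem \ref{quasi-classical}). Second, the step you single out as the technical heart is in fact immediate: the Dunkl--Opdam operators are indexed by \emph{all} vector fields $v$, and once the $f_Y$ are regular each such operator differs from $tL_v$ by an element of $\mathcal{O}(X_\delta)\rtimes G$; since $t\neq 0$ and a smooth affine variety has its ring of differential operators generated by functions and vector fields, generation of $D(X_\delta)\rtimes G$ follows with no invertibility problem for $\xi_Y$ to solve.

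For the second assertion your localization argument, identifying $Z(H_{0,c}(X,G))[\delta^{-1}]$ with $\mathcal{O}(T^*X_\delta)^G$ via Theorem \ref{new-main} and concluding birational equivalence with $(T^*X)/G$, is correct, but the source of the Poisson bracket is misattributed: on $Z(H_{0,c}(X,G))$ it arises from the deformation in the parameter $t$ (first-order term of the commutator in $H_{t,c}$ as $t\to 0$), not from the Rees construction on the order filtration. The filtration and its associated graded $\mathcal{O}(T^*X)\rtimes G$ enter only through the PBW theorem; the compatibility of the $t$-deformation bracket with the canonical symplectic bracket on $T^*X_\delta/G$ is precisely the content of Theorem 2.29 of \cite{Etingof3} and should be checked on the localized, untwisted family rather than asserted via the filtered-to-graded dictionary.
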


\begin{theorem}
In the notation of the above theorem, if $X/G$ is a rational variety, then ($t \neq 0$) $Frac \, U_{t,c}(X,G)$ is isomorphic to quotient ring of fractions of $A_n(\mathbb{C})$, the Weyl algebra, $n= dim X$. $\mathcal{O}(T^*X)^G$ has Poisson field of fractions isomorphic to $K_n$ --- so $\mathcal{M}(X,G)$  is Poisson rational.
\begin{proof}
The first claim follows from \cite{FS}, Thm 1.2. The second one follows from Theorem \ref{quasi-classical}.
\end{proof}
\end{theorem}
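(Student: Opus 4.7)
The plan is to prove the two assertions in sequence, using the two results explicitly cited in the theorem as the two main inputs, and then deduce the claim about $\mathcal{M}(X,G)$ as a consequence.

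For the first assertion, I would start from the previous theorem, which furnishes the identification $\Frac\,U_{t,c}(X,G)\simeq \Frac\,D(X)^G$ whenever $t\neq 0$. Since $G$ is finite, $\dim(X/G)=\dim X = n$, so the hypothesis that $X/G$ is rational means $X/G$ is birationally equivalent to the affine space $\mathbb{A}^n$. Applying \cite{FS}, Theorem 1.2 to the pair $(X,G)$ and to $Y=\mathbb{A}^n$ yields a Poisson field isomorphism $\Frac\,D(X)^G\simeq \Frac\,D(\mathbb{A}^n)=\Frac\,A_n(\mathbb{C})$, which combines with the identification above to give the first claim.

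For the second assertion, the argument is the exact quasi-classical analogue: since $X/G$ is rational, it is birationally equivalent to the smooth affine variety $\mathbb{A}^n$, so Theorem \ref{quasi-classical} applies with $Y=\mathbb{A}^n$ and produces a Poisson field isomorphism between $\Frac\,\mathcal{O}(T^*X)^G$ and $\Frac\,\mathcal{O}(T^*\mathbb{A}^n)=\Frac\,P_n=K_n$. Then the statement about $\mathcal{M}(X,G)$ is a direct consequence: the theorem cited just above asserts that $\mathcal{M}(X,G)$ is Poisson birationally equivalent to $(T^*X)/G$, so its Poisson field of fractions equals $\Frac\,\mathcal{O}(T^*X)^G$, which we have just shown is isomorphic to $K_n$. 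This is the definition of Poisson rationality.

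I do not expect a substantive obstacle here; both assertions have been reduced, by the previously established results, to invoking an ambient birational equivalence, and the rationality assumption is precisely what delivers that equivalence with the model $Y=\mathbb{A}^n$. The only small point deserving care is the matching of dimensions and smoothness needed to apply Theorem \ref{quasi-classical} and \cite{FS}, Theorem 1.2: one must observe that ``rational variety'' in this context forces birational equivalence with $\mathbb{A}^n$ for the correct $n=\dim X$, and that $\mathbb{A}^n$ is certainly a smooth affine variety, so both cited theorems apply without modification.
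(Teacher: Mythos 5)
Your proposal is correct and follows exactly the route the paper takes: the first claim via the identification $\Frac\,U_{t,c}(X,G)\simeq \Frac\,D(X)^G$ and \cite{FS}, Theorem 1.2 applied with $Y=\mathbb{A}^n$, and the second via Theorem \ref{quasi-classical} together with the Poisson birational equivalence of $\mathcal{M}(X,G)$ with $(T^*X)/G$. You have merely spelled out the details that the paper's two-line proof leaves implicit.
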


Theorem \ref{main-4} follows immediatly, noting that given a Weyl group $W$ with maximal torus $H$, $H/W$ is a rational variety (cf. proof of Theorem \ref{main-3}).

\end{document}